\providecommand{\cal}[1]{\mathcal{#1}}
\newcommand{\gen}{\mathbf{A}}
\newcommand{\B}{{\mathbb{B}}}
\newcommand{\C}{{\mathbb{C}}}
\newcommand{\N}{{\mathbb{N}}}
\newcommand{\R}{{\mathbb{R}}}
\newcommand{\Rn}{{\mathbb{R}}^{n}}
\newcommand{\dv}{\operatorname{div}}
\newcommand{\grad}{\operatorname{grad}}
\newcommand{\im}{\operatorname{i}}
\newcommand{\lap}{\operatorname{\Delta}}
\newcommand{\mlap}{-\!\operatorname{\Delta}}
\newcommand{\scal}[2]{(\,#1\,|\, #2\,)}
\newcommand{\set}[2]{\{\,#1\bigm| #2\,\}}
\newcommand{\Set}[2]{\bigl\{\,#1\bigm| #2\,\bigr\}}
\newcommand{\vvvert}{{|\hspace{-1.6pt}|\hspace{-1.6pt}|}}
\newcommand{\dual}[2]{\ensuremath{\langle{#1},{#2}\rangle}}
\newtheorem{theorem}{Theorem}
\newtheorem{proposition}{Proposition}
\newtheorem{lemma}{Lemma}
\newtheorem{corollary}{Corollary}
\theoremstyle{definition}
\theoremstyle{remark}
\newtheorem{remark}{Remark}
\begin{document}
\title[Isomorphic well-posedness of the
  inverse Neumann heat equation]{Isomorphic well-posedness of the
  final value problem\\ for the heat equation with the homogeneous Neumann condition}

\author[J.~Johnsen]{Jon Johnsen}
\address{Department of Mathematics, Aarhus  University, Ny Munkegade 118, Building 1530, DK-8000 Aarhus C, Denmark}
\email{jojoh@math.au.dk}
\subjclass[2010]{35A01, 47D06}

\begin{abstract} This paper concerns the final value problem for the heat
  equation subjected to the homogeneous Neumann condition on the boundary of a smooth open set in
  Euclidean space. The problem is here shown to be isomorphically well posed in the sense that
  there exists a linear homeomorphism between suitably chosen Hilbert spaces containing the
  solutions and the data, respectively. This improves a recent work of the author, in which
  the same problem was proven well-posed in the original sense of Hadamard under an additional
  assumption of H\"older continuity of the source term. Like for its predecessor,
  the point of departure is an abstract analysis in spaces of vector distributions of final
  value problems generated by coercive Lax--Milgram operators, now yielding isomorphic well-posedness
  for such problems.
  Hereby the data space is the graph normed domain of an unbounded operator that maps final states
  to the corresponding initial states, resulting in a non-local compatibility condition on the data.
  As a novelty, a stronger version of the compatibility condition is introduced with the purpose of
  characterising  the data that yield solutions having the regularity property of being square
  integrable in the generator's graph norm (instead of in the form domain norm). This
  result allows a direct application to the class 2 boundary condition in the considered inverse
  Neumann heat problem.
\end{abstract}

\keywords{Compatibility condition, final value data, inverse Neumann heat problem, isomorphically well-posed}

\maketitle

\section{Introduction}
The purpose of the present paper is to show rigorously that the heat conduction final value  problem 
with the homogeneous Neumann condition is
\emph{isomorphically} well-posed, in the sense that there exists an isomorphism between suitably
chosen spaces for the data and the corresponding solutions. 

This result is obtained below by improving the recent results in \cite{JJ19cor} by application of classical regularity properties in spaces of low regularity.

\bigskip

The central theme below is to characterise the 
$u(t,x)$ that in a fixed bounded open set 
$\Omega\subset\Rn$ ($n\ge1$) with $C^\infty$-smooth boundary $\Gamma=\partial\Omega$
fulfil the following equations, whereby $\Delta=\partial_{x_1}^2+\dots+\partial_{x_n}^2$ denotes
the Laplace operator and $\nu(x)$ stands for the exterior unit normal vector field at $\Gamma$:
\begin{equation}  \label{heat-intro}
\left.
\begin{aligned}
  \partial_tu(t,x)-\lap u(t,x)&=f(t,x) &&\quad\text{for $t\in\,]0,T[\,$,  $x\in\Omega$},
\\
   (\nu\cdot\grad) u(t,x)&=0 &&\quad\text{for $t\in\,]0,T[\,$, $x\in\Gamma$},
\\
  u(T,x)&=u_T(x) &&\quad\text{for $t=T$, $x\in\Omega$}.
\end{aligned}
\right\}
\end{equation}
In view of the final value condition at $t=T$, this problem is also called the inverse Neumann heat equation.
One area of interest of this could be a nuclear power plant 
hit by power failure at $t=0$: when at $t=T>0$ power is regained and 
the reactor temperatures $u_T(x)$ are measured, a
backwards calculation could possibly settle whether at some $t<T$ the temperatures $u(t,x)$
were high enough to cause damage to the fuel rods. 

Here it should be noted that the Neumann condition, which controls the heat \emph{flux} through the
boundary, is more natural from a physical point of view than the Dirichlet condition
$u|_{\Gamma}=g$, in which the value itself is prescribed at the boundary in terms of
a given function $g(t,x)$ on $\,]0,T[\,\times\Gamma$. Other boundary conditions may be also be natural, but are for the sake of simplicity not treated here.

Previously, the final value problem for the heat equation with the Dirichlet condition was shown to
be well posed in a joint work of the author \cite{ChJo18ax}, with more concise expositions in
\cite{ChJo18, JJ19}. 
For this problem, the obtained well-posedness was in the original sense of Hadamard, namely, that
there is
\emph{existence, uniqueness} and \emph{stability} of a solution $u\in X$ for given data
$(f,g,u_T)\in Y$, in certain Hilbertable spaces $X$, $Y$ that were described explicitly. 
Hereby the data space $Y$ was defined in terms of a special compatibility condition on the triples
$(f,g,u_T)$, which was introduced for the purpose in \cite{ChJo18ax}. 

This development seemingly closed a gap that had remained in the understanding since the 1950's,
even though well-posedness is crucial for the interpretation and accuracy of 
numerical schemes for the problem (the work of John~\cite{John55} was pioneering,
but e.g.\ also Eld\'en \cite{Eld87} could be mentioned).
Briefly phrased, the results are obtained via a suitable structure on the reachable set for 
parabolic evolution equations.

\bigskip

In the present paper the intention is not just to focus on the more relevant Neumann condition, but rather
to go an important step further by introducing solution and data spaces $X_1$ and $Y_1$ that are so
chosen that the operator ${\cal P}(u)=(u'-\lap u, u(T))$ is an isomorphism, that is, a linear homeomorphism
\begin{equation}
  X_1 \xleftrightarrow{\;\;{\cal P}\;\;} Y_1.
\end{equation}
It is also proposed to indicate this
strong form of well-posedness by terming \eqref{heat-intro} \emph{isomorphically well posed}; 
cf.\ the title of the paper.

More specifically, using the full yield of the source term, which is the vector $y_f=\int_0^T e^{(T-t)\lap_N}f(t)\,dt$,
the isomorphic well-posedness of \eqref{heat-intro} is obtained below for the spaces 
\begin{align}
  X_1 &= L_2(0,T;H^2(\Omega)) \bigcap C([0,T]; H^1(\Omega)) \bigcap H^1(0,T; L_2(\Omega)),
   \label{X1-intro}
\\
  Y_1&= \left\{ (f,u_T) \in L_2(0,T;L_2({\Omega})) \oplus H^1(\Omega) \Bigm|  
                  u_T - y_f \in e^{T\lap_N}\big(H^1(\Omega)\big) \right\},
\label{Y1-intro}
\end{align}
which are both shown to be Banach spaces under the norms
\begin{align}
  \|u\|_{X_1}&= \Big(\int_0^T\|u(t)\|^2_{H^{2}(\Omega)}\,dt 
                 +\sup_{t\in[0,T]}\sum_{|\alpha|\le 1} \int_\Omega |\partial^\alpha_x u(x,t)|^2\,dx+
                 \int_0^T\|\partial_t u(t)\|^2_{L_2({\Omega})}\,dt  \Big)^{1/2},
\\  \label{Y1norm-intro}
 \| (f,u_T) \|_{Y_1}  
  &= \Big(\int_0^T\|f(t)\|^2_{L_2({\Omega})}\,dt 
  + \int_\Omega\big(\sum_{|\alpha|\le1}|\partial^\alpha_x u_T(x)|^2+\sum_{|\alpha|\le1}|\partial^\alpha_x e^{-T\lap_N}(u_T - y_f )(x)|^2\big)\,dx\Big)^{1/2}.
\end{align} 
Furthermore, it is also established that the solution  is given for $0\le t\le T$ 
by the following variant of the Duhamel formula,
\begin{equation} \label{Duhamel-intro}
  u(t) = e^{t\lap_N}e^{-T\lap_N}\Big(u_T-\int_0^T e^{(T-t)\lap_N}f(t)\,dt\Big) + \int_0^t e^{(t-s)\lap_N}f(s) \,ds.
\end{equation}
For brevity the reader is referred to the details in Theorem~\ref{heatN'-thm} and
Corollary~\ref{heatN-cor} below on the
isomorphic well-posedness of \eqref{heat-intro}. These results substantially improve the Hadamard well-posedness in \cite{JJ19cor}.

However, it is noteworthy here, that both formula \eqref{Duhamel-intro} as well as the definition
of $Y_1$ and its norm in \eqref{Y1-intro}, \eqref{Y1norm-intro} make use of the 
fact that the Neumann realisation $\lap_N$ of the Laplace operator generates an analytic semigroup
$e^{T\lap_N}$ in $L_2(\Omega)$, which is \emph{invertible} in the class of closed operators in
$L_2(\Omega)$, so that one can set
\begin{equation} \label{inv-id}
  e^{-T\lap_N}=(e^{T\lap_N})^{-1}.  
\end{equation}
Section~\ref{inj-sect} below reviews the fact that every analytic semigroups consists entirely of injective
operators, which has \eqref{inv-id}  as a special case. The exposition there is close to
the account in \cite{ChJo18ax}, but it has been included not only to make the
present paper reasonably self-contained, but also to make it precise that the semigroups need not
be uniformly bounded (following up on the indications made in \cite{JJ19,JJ19cor})
and to add a local version and some historical remarks.

As prerequisites, Section~\ref{ivp-sect} recalls a few known extensions of the  analysis
of initial value problems in the classical treatise of Lions and Magenes \cite{LiMa72}. Some of this
was detailed in \cite{JJ19cor}, like solvability theory of problems generated by
$V$-coercive Lax--Milgram operators, estimates of the solution operator and the resulting
Duhamel formula. But for the Neumann problem it is indispensable to include a regularity result in
order to treat the class $2$ boundary condition  in \eqref{heat-intro} and the above $H^2$-spaces, cf.\ \eqref{X1-intro}.
Moreover, in addition to the sufficiency of a certain data regularity 
shown explicitly in \cite{LiMa72},  its necessity is important for the purpose of obtaining
an isomorphism, so Section~\ref{ivp-sect} accounts for this.

Section~\ref{fvp-sect} analyses final value problems in a
framework of Lax--Milgram operators $A$ that are $V$-coercive in a Gelfand triple
$V\hookrightarrow H\hookrightarrow V^*$. This extends the
$V$-elliptic case covered in \cite{ChJo18,ChJo18ax,JJ19}, as also done in \cite{JJ19cor}, but as a new feature
final value problems are treated in the more regular spaces 
$D(A)\hookrightarrow [D(A),H]_{1/2}\hookrightarrow H$. At this level isomorphic well-posedness is obtained too; cf.\
Corollary~\ref{fvA-cor}.

Section~\ref{Neu-sect} accounts for the treatment of the final value problem \eqref{heat-intro}
using the general results in Section~\ref{fvp-sect}; cf.\ Theorem~\ref{heatN-thm},
Theorem~\ref{heatN'-thm} and Corollary~\ref{heatN-cor} there. Some final remarks are gathered in Section~\ref{final-sect}.

\section{Preliminaries: Injectivity of Analytic Semigroups}
  \label{inj-sect}
For analysis of final value problems it is crucial that
analytic semigroups of operators always consist of \emph{injections}. This
shows up both technically and conceptually, that is, both in the proofs
and in the objects entering the theorems.

Some aspects of semigroup theory in a complex Banach space $B$ are therefore recalled.
Besides classical references by Davies~\cite{Dav80}, Pazy~\cite{Paz83}, Tanabe~\cite{Tan79} or
Yosida~\cite{Yos80}, more recent accounts are given by Engel and Nagel~\cite{EnNa00},  Arendt~\cite{Are04} or in \cite{ABHN11}.

The generator is
$\gen x=\lim_{t\to0^+}\frac1t(e^{t\gen}x-x)$, where $x$ belongs to the domain  $D(\gen)$ when the
limit exists. $\gen$ is a densely defined, closed linear 
operator in $B$ that for some $\omega \in\R$, $M \geq 1$ satisfies the resolvent estimates
$\|(\gen-\lambda I)^{-n}\|_{\B(B)}\le M/(\lambda-\omega)^n$ for $\lambda>\omega$, $n\in\N$.

The associated $C_0$-semigroup of operators $e^{t\gen}$ in $\B(B)$ is of type $(M,\omega)$: 
it fulfils $e^{t\gen}e^{s \gen}=e^{(s+t)\gen}$ for $s,t\ge0$, $e^{0\gen}=I$ (the identity) and 
$\lim_{t\to0^+}e^{t \gen}x=x$ for $x\in B$, whilst there is an estimate
\begin{align}  
  \|e^{t\gen}\|_{\B(B)} \leq M e^{\omega t} \quad \text{ for } 0 \leq t < \infty.
\end{align}
There is also a well-known translation trick, which is used repeatedly throughout, namely one has
\begin{equation}  \label{Amu-id}
  e^{t\gen}=e^{t\mu}e^{t(\gen-\mu I)} \qquad\text{for every $\mu\in\C$}.
\end{equation}
Indeed, the right-hand side is a $C_0$-semigroup
having $\gen$ as its generator (since $e^{t\mu}=1+t\mu+o(t)$), so the formula results from the
injectivity of $e^{t\gen}\mapsto \gen$. More explicitly, by the proof of the Hille--Yosida theorem, 
there is a bijection of the semigroups of type $(M,\omega)$ onto (the resolvents of) the stated class
of generators given by the Laplace transformation formula
\begin{equation}
  (\lambda I-\gen)^{-1}=\int_0^\infty e^{-t\lambda}e^{t\gen}\,dt  =\int_0^\infty e^{t(\gen-\lambda
    I)}\,dt ,\qquad \text{for $\Re\lambda>\omega$}.
\end{equation}
This formula also follows from the Fundamental Theorem for vector functions. Now, as the
evaluation map $\mathcal{E}_xT= Tx$ is bounded $\B(B)\to B$ for $x\in B$, the Bochner identity implies that for $\Re z>\omega$,
\begin{equation} \label{Eint'-id}
  \big(\int_0^\infty e^{t(\gen-zI)}\,dt\big)x= (zI-\gen)^{-1}x= \int_0^\infty e^{t(\gen-zI)}x\,dt.
\end{equation}

As for the \emph{injectivity}, 
recall that if $e^{t\gen}$ is analytic, then $u'=\gen u$, $u(0)=u_0$ is  for \emph{every} $u_0\in B$ uniquely solved by
$u(t)=e^{t\gen}u_0$. 
Here injectivity of $e^{t\gen}$ is  obviously equivalent to the geometric property that the trajectories of two solutions
$e^{t\gen}v$ and $e^{t\gen}w$ of $u'=\gen u$  have no confluence point in $B$ for $v\ne w$.

Despite this, the literature seems to have focused on examples of  non-invertible semigroups $e^{t\gen}$, like \cite[Ex.~2.2.1]{Paz83}; these necessarily concern non-analytic cases.
The well-known result below gives a criterion for $\gen$ to generate a
$C_0$-semigroup $e^{z\gen}$ that is defined and analytic for $z$ in the open sector
\begin{equation}
  S_{\theta}= \Set{z\in\C}{z\ne0,\ |\arg z | < \theta}.
\end{equation}
This notation is also used for the spectral sector in property (i) in the result:

\begin{proposition}  \label{Pazy'-prop} 
If $\gen$ generates a $C_0$-semigroup of type $(M,\omega)$ and $\omega\in\rho(\gen)$,
the following properties are equivalent for each
$\theta \in\,]0,\frac{\pi}{2}[\,$:
\begin{itemize}
  \item[{\rm (i)}]
  The resolvent set $\rho(\gen)$ contains $\{\omega\}\cup\big(\omega+S_{\theta+\pi/2}\big)$  and 
\begin{equation} 
  \sup\Set{|\lambda-\omega|\cdot\|(\lambda I - \gen)^{-1} \|_{\B(B)}}{\lambda\in\omega+S_{\theta+\pi/2}} <\infty. 
\end{equation}
  \item[{\rm (ii)}] 
 The semigroup $e^{t \gen}$ extends to an analytic semigroup $e^{z \gen}$ defined for $z\in
 S_{\theta}$ with
\begin{equation}
   \sup\Set{ e^{-z\omega}\|e^{z\gen}\|_{\B(B)}}{z\in \overline{S}_{\theta'}}<\infty \quad \text{whenever $0<\theta'<\theta$}. 
\end{equation}
\end{itemize}
In the affirmative case, $e^{t \gen}$ is differentiable in $\B(B)$  with $(e^{t\gen})' = \gen
e^{t\gen}$  for $t>0$, and 
\begin{align} \label{eta-est}
  \sup_{t>0} e^{-t\eta}\|e^{t\gen}\|_{\B(B)}+\sup_{t>0} te^{-t\eta}\|\gen e^{t\gen}\|_{\B(B)} <\infty
\end{align}
for  every $\eta>\alpha(\gen)$, whereby $\alpha(\gen)=\sup\Re\sigma(\gen)$
denotes the spectral abscissa of $\gen$.
\end{proposition}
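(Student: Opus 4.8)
The plan is to establish the equivalence of (i) and (ii), after which the concluding differentiability assertion and the estimate \eqref{eta-est} drop out of the construction. By the translation identity \eqref{Amu-id} with $\mu=\omega$, I may replace $\gen$ by $\gen-\omega I$; this recentres the spectral sector at the origin and changes the semigroup only by the scalar factor $e^{t\omega}$, so that both (i) and (ii) are preserved. I therefore assume $\omega=0$ with $0\in\rho(\gen)$, so that $\sigma(\gen)$ lies outside $S_{\theta+\pi/2}\cup\{0\}$.

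For (i)$\,\Rightarrow\,$(ii), I would \emph{define} a candidate extension by the Dunford--Taylor integral
\[
  e^{z\gen} = \frac{1}{2\pi\im}\int_\Gamma e^{z\lambda}(\lambda I-\gen)^{-1}\,d\lambda,
\]
where, for given $z$ with $|\arg z|\le\theta'<\theta$, the contour $\Gamma$ runs along the two rays $\arg\lambda=\pm\varphi$ with $\tfrac{\pi}{2}+\theta'<\varphi<\tfrac{\pi}{2}+\theta$, joined by a small arc encircling the origin; such $\varphi$ exists precisely because $\theta'<\theta$. Property (i) gives $\|(\lambda I-\gen)^{-1}\|_{\B(B)}\le C/|\lambda|$ on $\Gamma$, and the choice of $\varphi$ ensures $\Re(z\lambda)\to-\infty$ along both rays, so the integral converges absolutely and is analytic in $z$ on $S_\theta$ (differentiate under the integral sign). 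Scaling the arc radius to $1/|z|$ and substituting $r=s/|z|$ on the rays turns the estimate into a $z$-independent one, giving the sectorial bound $\|e^{z\gen}\|_{\B(B)}\le C'$ on each $\overline S_{\theta'}$. The law $e^{z\gen}e^{w\gen}=e^{(z+w)\gen}$ comes from a product of two such integrals over nested contours: after Fubini and the resolvent identity $(\lambda I-\gen)^{-1}(\mu I-\gen)^{-1}=(\mu-\lambda)^{-1}\big((\lambda I-\gen)^{-1}-(\mu I-\gen)^{-1}\big)$, one of the two inner integrals is evaluated by the residue/Cauchy theorem and a single resolvent integral survives. Strong continuity $e^{z\gen}x\to x$ as $z\to0$ in the sector, and the identification of the generator with $\gen$, then follow by comparison with the Laplace representation \eqref{Eint'-id} and density of $D(\gen)$.

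For the converse (ii)$\,\Rightarrow\,$(i), I would start from the Laplace formula \eqref{Eint'-id}, valid for $\Re\lambda>0$, and rotate the ray of integration. Since $z\mapsto e^{z\gen}$ is analytic and, by (ii), bounded on each $\overline S_{\theta'}$, Cauchy's theorem lets me replace the path $[0,\infty[$ by $e^{\im\psi}[0,\infty[$ for any $|\psi|<\theta'$; the joining arcs at infinity contribute nothing because the decay of $e^{-\lambda t}$ beats the uniform bound on $\|e^{z\gen}\|$. This represents $(\lambda I-\gen)^{-1}$ as a convergent integral whenever $\Re(e^{\im\psi}\lambda)>0$, i.e.\ on a rotated half-plane; taking the union over $|\psi|<\theta'$ and then $\theta'\uparrow\theta$ yields the sector $S_{\theta+\pi/2}$, while a direct estimate of the rotated integral gives $|\lambda|\,\|(\lambda I-\gen)^{-1}\|_{\B(B)}\le C$ there, which is (i).

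Finally, for the differentiability and \eqref{eta-est}, I would differentiate the Dunford--Taylor integral in $t>0$ to get $(e^{t\gen})'=\tfrac{1}{2\pi\im}\int_\Gamma\lambda e^{t\lambda}(\lambda I-\gen)^{-1}\,d\lambda$; the algebraic identity $\lambda(\lambda I-\gen)^{-1}=I+\gen(\lambda I-\gen)^{-1}$ together with $\int_\Gamma e^{t\lambda}\,d\lambda=0$ (the antiderivative vanishing at both infinite endpoints, since $\Re(t\lambda)\to-\infty$) identifies this with $\gen e^{t\gen}$. The extra factor $|\lambda|$ produces, after the same scaling, the sharp bound $\|\gen e^{t\gen}\|_{\B(B)}\le C/t$; undoing the translation by $\omega$ via \eqref{Amu-id} converts the two bounds into the boundedness of $e^{-t\eta}\|e^{t\gen}\|$ and of $te^{-t\eta}\|\gen e^{t\gen}\|$ for every $\eta>\alpha(\gen)$, since such $\eta$ keeps the rotated rays inside $\rho(\gen)$. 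I expect the \textbf{main obstacle} to be the contour bookkeeping common to both implications: proving absolute convergence and the vanishing of the arc contributions, and justifying the interchange of the order of integration in the semigroup law, all \emph{uniformly up to the edge of the sector}. This uniformity is exactly what forces the bound in (ii) to be stated only for $\theta'<\theta$ rather than on the closed sector $\overline S_\theta$ itself.
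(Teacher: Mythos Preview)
Your argument for the equivalence (i)$\iff$(ii) is correct and is essentially the standard Dunford--Taylor construction found in Pazy's book. The paper, however, does not reproduce this argument: it simply observes that for $\omega=0$ the equivalence is exactly Theorem~2.5.2 in \cite{Paz83}, and then reduces the general case to $\omega=0$ via the translation identity \eqref{Amu-id}, noting that the identity extends to complex $z\in S_\theta$ by unique analytic extension. So your route is more self-contained, while the paper's is a one-line citation plus a translation.

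There is a genuine difference in how the estimate \eqref{eta-est} is handled. Your final sentence (``undoing the translation by $\omega$ \dots\ for every $\eta>\alpha(\gen)$, since such $\eta$ keeps the rotated rays inside $\rho(\gen)$'') conflates two things: undoing the translation by $\omega$ only gives the bounds with $\eta=\omega$, not with an arbitrary $\eta>\alpha(\gen)$; and merely having $\eta>\alpha(\gen)$ does not by itself guarantee the sectorial resolvent estimate needed to rerun the Dunford integral centred at $\eta$. The paper closes this gap differently. For the first supremum it invokes the known fact that for analytic semigroups the spectral abscissa equals the growth bound, $\alpha(\gen)=\omega_0$ (citing \cite[Cor.~IV.3.12]{EnNa00}), so $\sup_{t>0}e^{-t\eta}\|e^{t\gen}\|<\infty$ for every $\eta>\omega_0=\alpha(\gen)$ by the very definition of $\omega_0$. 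For the second supremum it picks $\alpha(\gen)<\eta'<\eta$, writes $\gen=\eta' I+(\gen-\eta' I)$, and applies the classical uniform bound $t\|(\gen-\eta' I)e^{t(\gen-\eta' I)}\|\le C$ from the $\omega=0$ case together with \eqref{Amu-id}. You should either invoke $\alpha(\gen)=\omega_0$ explicitly, or argue (as is also standard) that the resolvent estimate in (i) persists after shifting the vertex from $\omega$ to any $\eta>\alpha(\gen)$ at the cost of shrinking $\theta$; as written, your last step does not quite close.
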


If $\omega=0$,
the equivalence (i)$\iff$(ii) is contained in Theorem~2.5.2 in \cite{Paz83}.
This extends to $\omega>0$, using for both
implications that  \eqref{Amu-id}  holds with $\mu=\omega$ for complex $t$ in $S_\theta$ by unique analytic extension.

The first part of \eqref{eta-est} holds since analyticity implies $\alpha(\gen)=\omega_0$,
where the growth bound $\omega_0$ is 
the infimum of the $\omega$ such that $\|e^{t\gen}\|_{\B(B)}\le M e^{t\omega}$ for some $M$ 
(so $\eta=\omega$ is possible); cf.\ \cite[Cor.~IV.3.12]{EnNa00}.
For the last part we have $\omega>\alpha(\gen)=\omega_0$ (as $\omega\in\rho(\gen)$) and may hence consider 
$\alpha(\gen)<\eta'<\eta$, insert
$\gen=\eta' I+(\gen-\eta' I)$ and invoke the classical uniform bound of 
$t\|\gen e^{t\gen}\|_{\B(B)}$ from the case $\omega=0$.

The purpose of stating Proposition~\ref{Pazy'-prop} for general type $(M,\omega)$
semigroups is to emphasize that cases with $\omega>0$ only have other estimates in
the closed subsectors $\overline{S}_{\theta'}$, whereas the mere analyticity in $S_{\theta}$ 
is \emph{unaffected} by the translation by $\omega I$. This lead to the following sharpening of
\cite[Prop.\ 1]{ChJo18ax}:

\begin{proposition}[\cite{JJ19cor}]  \label{inj-prop}
If a $C_0$-semigroup $e^{t\gen}$ of type $(M,\omega)$ on a complex Banach space $B$ has an
analytic extension $e^{z\gen}$ to
$S_{\theta}$ for some $\theta>0$, then $e^{z\gen}$ is an \emph{injective} operator for every $z \in S_\theta$.
\end{proposition}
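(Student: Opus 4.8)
The plan is to argue directly from the geometric reformulation stated just before the proposition: I would show that if two trajectories meet, they meet everywhere and hence at the origin, where strong continuity forces them to coincide. Concretely, suppose $e^{z_0\gen}x=0$ for some $z_0\in S_\theta$ and some $x\in B$; the goal is to deduce $x=0$. The only structural facts I would use are the semigroup law for the analytic extension, the identity theorem for holomorphic $B$-valued functions, and the strong continuity $\lim_{t\to0^+}e^{t\gen}x=x$.

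First I would consider the single $B$-valued function $F(z)=e^{z\gen}x$, which is holomorphic on the connected open set $S_\theta$ by hypothesis. The key observation is that $F$ vanishes not just along a ray but on a whole open subset of $S_\theta$: since $z_0$ lies in the \emph{open} set $S_\theta$ and $\theta>0$, for $w$ ranging over a small disc about a small positive real number both $w\in S_\theta$ and $z_0+w\in S_\theta$ hold, and there the complex semigroup law gives $F(z_0+w)=e^{w\gen}e^{z_0\gen}x=e^{w\gen}0=0$. These admissible $w$ fill a genuine two-dimensional open set, so $F$ vanishes on a nonempty open subset of $S_\theta$.

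Next I would propagate this to all of $S_\theta$ by the identity theorem. As $F$ is $B$-valued I would first compose with an arbitrary functional $\phi\in B^*$: each scalar map $z\mapsto\dual{\phi}{F(z)}$ is holomorphic on the connected set $S_\theta$ and vanishes on the open subset just found, hence vanishes identically; since $\phi$ is arbitrary, Hahn--Banach yields $F\equiv0$ on $S_\theta$. In particular $e^{t\gen}x=F(t)=0$ for every $t>0$ on the positive real axis, which lies in $S_\theta$ because $\theta>0$. Letting $t\to0^+$ and invoking strong continuity then gives $x=\lim_{t\to0^+}e^{t\gen}x=0$, as desired.

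I expect the only delicate point to be the first step, namely verifying that the vanishing forced by $e^{z_0\gen}x=0$ really spreads over a two-dimensional open set, rather than merely along the translated ray $z_0+t$; it is precisely the openness of $S_\theta$ combined with the semigroup law for complex parameters --- and not strong continuity along one ray alone --- that makes the identity theorem applicable. The reduction from vector-valued to scalar analyticity via functionals, and the validity of the semigroup identity for complex arguments (obtained by analytic continuation of the real-parameter identity), are routine once the analytic extension to $S_\theta$ is granted.
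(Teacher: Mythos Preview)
Your argument is correct and follows the same overall architecture as the paper's proof: define $F(z)=e^{z\gen}x$, show it vanishes on an open subset of $S_\theta$, invoke the identity theorem on the connected domain, and conclude via strong continuity at $t=0^+$.

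The one genuine difference lies in how the open set of zeros is obtained. The paper expands $F$ in a Taylor series at $z_0$ and uses the identity $f^{(n)}(z_0)=\gen^n e^{z_0\gen}u_0$ (quoted from Pazy) to see that every coefficient vanishes, so $F\equiv0$ on a ball $B(z_0,r)$. You instead use the complex semigroup law $e^{(z_0+w)\gen}=e^{w\gen}e^{z_0\gen}$ to transport the single zero at $z_0$ to an open disc of zeros near $z_0+t_0$. Your route avoids appealing to the explicit derivative formula and replaces it with the (equally standard) fact that the semigroup identity persists under analytic continuation; in return the paper's version gives the vanishing directly on a ball centred at $z_0$, which is what feeds into the local corollary (Proposition~\ref{locinj-prop}) about support-preserving generators. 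Your reduction to scalar holomorphic functions via $B^*$ and Hahn--Banach is fine, though the identity theorem for Banach-space-valued holomorphic maps applies directly and would let you skip that step.
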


\begin{proof}
Let $e^{z_0 \gen} u_0 = 0$ hold for $u_0 \in B$ and $z_0 \in S_{\theta}$.
By the differential calculus in Banach spaces,
analyticity of $e^{z\gen}$ in $S_{\theta}$ carries over $f(z)= e^{z\gen}u_0$.
So for $z$ in a suitable open ball $B(z_0,r)\subset S_{\theta}$, a Taylor expansion and the identity
$f^{(n)}(z_0) = \gen^n e^{z_0 \gen}u_0$ for analytic semigroups (cf.~\cite[Lem.~2.4.2]{Paz83})  give
\begin{align}
  f(z) = \sum_{n=0}^{\infty} \frac{1}{n!}(z-z_0)^n f^{(n)}(z_0)=\sum_{n=0}^{\infty}
  \frac{1}{n!}(z-z_0)^n 
    \gen^n e^{z_0 \gen}u_0\equiv 0.
\end{align}
Therefore $f\equiv 0$ holds on $S_{\theta}$ by unique analytic extension, hence
$u_0 = \lim_{t \rightarrow 0^+} e^{t\gen} u_0 = \lim_{t \rightarrow 0^+}f(t) = 0$.
Thus the null space of $e^{z_0\gen}$ is trivial for every $z_0\in S_\theta$.
\end{proof}

As a corollary to the proof, in case $B=L_p(\Omega)$ for $1\le p<\infty$ and some
open set $\Omega\subset\Rn$, it is seen that if $u=e^{t \gen} u_0$ 
fulfils $u(t_0,\cdot)=0$ in an open subset $\Omega_0\subset\Omega$ for a given 
$t_0>0$,  the partial sums of the above power series converge to $u$ in $L_p(\Omega)$ for
$z=t$, $z_0=t_0$ with $|t-t_0|<t_0\tan\theta$; so  for each such $t$, a subsequence converges pointwise to $u(t,\cdot)$ a.e.\ in $\Omega_0$; which  simplifies to $0$
a.e.\ in $\Omega_0$ if $\gen$ preserves support in $\Omega$, so that $\gen^n u(t,\cdot)=0$ in $\Omega_0$. As an iteration will cover all $t>0$, one has the local result:

\begin{proposition} \label{locinj-prop}
  If, in addition to the hypothesis in Proposition~\ref{inj-prop}, the Banach space is given as $B=L_p(\Omega)$ for $1\le
  p<\infty$ and an open set $\Omega\subset\Rn$ in which $\gen$ preserves support, when a solution of  $u'=\gen u$ fulfils
  $u(t_0,\cdot)=0$ in an open subset $\Omega_0\subset\Omega$ for some $t_0>0$, then
  $u(t,x)=0$ for all $t>0$ and a.e.\ $x\in\Omega_0$.
\end{proposition}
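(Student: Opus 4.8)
The plan is to treat this as the spatially local counterpart of Proposition~\ref{inj-prop}: whereas the latter propagates a vanishing \emph{at a point of $B$} along the whole trajectory, here a vanishing on an open \emph{subset} $\Omega_0$ at one instant $t_0$ is to be propagated to all times. As recalled before Proposition~\ref{inj-prop}, analyticity makes $u(t)=e^{t\gen}u_0$ the unique solution of $u'=\gen u$, so without loss of generality I would work with this representation and with the real-analytic map $f(t)=e^{t\gen}u_0$, whose derivatives are $f^{(n)}(t)=\gen^n e^{t\gen}u_0$ as in the proof of Proposition~\ref{inj-prop}.

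First I would exploit the smoothing of analytic semigroups: $e^{t_0\gen}u_0$ lies in $\bigcap_{n\in\N}D(\gen^n)$, so $\gen^n u(t_0,\cdot)$ is defined for every $n$. Since by hypothesis $u(t_0,\cdot)=0$ a.e.\ on $\Omega_0$ and $\gen$ preserves support in $\Omega$, an induction on $n$ gives $\gen^n u(t_0,\cdot)=0$ a.e.\ on $\Omega_0$ for all $n$. Next, exactly as in the proof of Proposition~\ref{inj-prop} and the discussion preceding the present statement, the Taylor expansion
\[
 u(t,\cdot)=\sum_{n=0}^{\infty}\frac{(t-t_0)^n}{n!}\,\gen^n e^{t_0\gen}u_0
\]
converges to $u(t,\cdot)$ in $L_p(\Omega)$ for real $t$ with $|t-t_0|<t_0\tan\theta$. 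Every partial sum vanishes a.e.\ on $\Omega_0$ by the previous step, and $L_p$-convergence yields a subsequence of partial sums converging a.e.\ on $\Omega$; passing to the limit gives $u(t,\cdot)=0$ a.e.\ on $\Omega_0$ for each such $t$.

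Finally I would globalise by a connectedness argument. Setting $J=\set{t>0}{u(t,\cdot)=0 \text{ a.e.\ on } \Omega_0}$, the local step shows $t_0\in J$ and, more importantly, that $J$ contains the set $\,]t_1(1-\tan\theta),t_1(1+\tan\theta)[\,\cap\,]0,\infty[\,$ around any $t_1\in J$; indeed the argument may be restarted from any such $t_1$, because $u(t_1,\cdot)=0$ a.e.\ on $\Omega_0$ again places us in the initial situation. As the radius $t_1\tan\theta$ scales with the base point, these intervals make $J$ both open and closed in $\,]0,\infty[\,$ and reach arbitrarily large $t$ as well as $t\to0^+$; by connectedness $J=\,]0,\infty[\,$, which is the assertion.

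I expect the main obstacle to be the transfer of the \emph{operator-level} support preservation through the \emph{infinite} series to an a.e.\ statement about $u(t,\cdot)$: one cannot argue termwise in the limit directly, and the point is precisely that each finite partial sum vanishes \emph{identically} a.e.\ on $\Omega_0$, after which the a.e.\ convergence of a subsequence extracted from $L_p$-convergence does the rest. A secondary care point is the scaling of the convergence radius with $t_1$, which is exactly what allows the single local step to be iterated so as to cover every $t>0$ rather than merely a neighbourhood of $t_0$.
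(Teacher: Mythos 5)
Your proposal is correct and follows essentially the same route as the paper: the Taylor expansion of $e^{t\gen}u_0$ about $t_0$, the termwise vanishing on $\Omega_0$ via support preservation, the a.e.\ convergent subsequence extracted from $L_p$-convergence of the partial sums, and then iteration of the local step to cover all $t>0$. Your connectedness formulation of the final step is merely a more explicit rendering of the paper's remark that ``an iteration will cover all $t>0$'', the key point in both cases being that the convergence radius $t_1\tan\theta$ scales with the base point.
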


\begin{remark}  \label{Yosida-rem}
Not surprisingly, Proposition~\ref{locinj-prop} has a forerunner in work of
Yosida \cite{Yos59}, who gave the above argument for $p=2$ under the extra assumption that 
$\gen$ is a strongly elliptic differential operator in $\Omega$.
The concise conclusion in Proposition~\ref{inj-prop} was not reached in \cite{Yos59} (although
$\Omega_0=\Omega$ is a possibility in Proposition~\ref{locinj-prop}), and it seems not
to have appeared in the semigroup literature during the following decades, until it was
shown for $\omega=0$ in \cite{ChJo18,ChJo18ax}.
Proposition~\ref{inj-prop} was anticipated  for  
$z>0$, $\theta\le \pi/4$ and $B$ a Hilbert space in \cite{Sho74},
but not quite obtained; cf.\ details in \cite[Rem.~1]{ChJo18ax} and  \cite[Rem.~3]{JJ18logconv}.
Masuda \cite{Mas67} used the unique continuation property to obtain the stronger result that $u=0$
extends from $\{t_0\}\times\Omega_0$ to $\R_+\times\Omega$; Rauch \cite[Cor.~4.3.9]{Rau91} gave 
a version for $\lap$ on $\Rn$.
\end{remark}

As a result of the above injectivity, for an \emph{analytic} semigroup 
$e^{t\gen}$ we may consider its inverse that, like when $e^{t\gen}$ forms a group in
$\mathbb{B}(B)$, may be denoted for $t>0$ by
$e^{-t\gen} = (e^{t\gen})^{-1}$.
Clearly $e^{-t\gen}$ maps its domain $D(e^{-t\gen})=R(e^{t\gen})$ bijectively onto $H$, and
it is  an unbounded, but closed operator in $B$. 

Specialising to a Hilbert space $B=H$, then also $(e^{t\gen})^*=e^{t\gen^*}$ is analytic, so 
$Z(e^{t\gen^*})=\{0\}$ holds for its null space by Proposition~\ref{inj-prop}; whence
$D(e^{-t\gen})$ is dense in $H$.
Some further basic properties are:

\begin{proposition}{\cite[Prop.\;2]{ChJo18ax}}  \label{inverse-prop}
The above inverses $e^{-t\gen}$ form a semigroup of unbounded operators in $H$,
\begin{equation} 
  e^{-s\gen}e^{-t\gen}= e^{-(s+t)\gen} \qquad \text{for $t, s\ge0$}.
\end{equation}
This extends to $(s,t)\in\R\times \,]-\infty,0]$, whereby $e^{-(t+s)\gen}$  may be unbounded for $t+s>0$. 
Moreover, as unbounded operators the $e^{-t\gen}$ commute with $e^{s \gen}\in \B(H)$, that is,
$e^{s \gen}e^{-t\gen}\subset e^{-t\gen}e^{s\gen}$ for $t,s\ge0$,
and have a descending chain of domains,
$H\supset  D(e^{-t\gen}) \supset D(e^{-t'\gen})$ for $0<t<t'$.
\end{proposition}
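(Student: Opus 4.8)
The plan is to reduce everything to the forward semigroup identity $e^{s\gen}e^{t\gen}=e^{(s+t)\gen}$ for $s,t\ge0$, combined with the injectivity guaranteed by Proposition~\ref{inj-prop}, which makes each inverse $e^{-t\gen}=(e^{t\gen})^{-1}$ a well-defined single-valued closed operator with domain $R(e^{t\gen})$. The recurring algebraic tool is that for injective operators $S,T\in\B(H)$ the product $ST$ is again injective with $(ST)^{-1}=T^{-1}S^{-1}$, understood as an equality of unbounded operators with the natural domain $\{x\in D(S^{-1}) : S^{-1}x\in D(T^{-1})\}$; one checks this domain equals $R(ST)$ and that the actions agree.

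First I would establish the semigroup law for the inverses. Factoring the forward product in both orders and inverting each, the tool gives $e^{-s\gen}e^{-t\gen}=(e^{t\gen}e^{s\gen})^{-1}=e^{-(s+t)\gen}=(e^{s\gen}e^{t\gen})^{-1}=e^{-t\gen}e^{-s\gen}$. The only point requiring care is that the domains match: I would verify directly that $x\in R(e^{t\gen})$ with $e^{-t\gen}x\in R(e^{s\gen})$ holds if and only if $x\in R(e^{(s+t)\gen})$, using $e^{(s+t)\gen}=e^{t\gen}e^{s\gen}$, so that the composed operator and $e^{-(s+t)\gen}$ share the domain $R(e^{(s+t)\gen})$.

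Next, for the commutation relation I would start from $e^{s\gen}e^{t\gen}=e^{t\gen}e^{s\gen}$ and, for $x=e^{t\gen}y\in D(e^{-t\gen})$, compute $e^{s\gen}e^{-t\gen}x=e^{s\gen}y$ while $e^{s\gen}x=e^{t\gen}e^{s\gen}y\in R(e^{t\gen})$, so $e^{s\gen}x\in D(e^{-t\gen})$ and $e^{-t\gen}e^{s\gen}x=e^{s\gen}y$. This shows the values agree on $D(e^{-t\gen})=D(e^{s\gen}e^{-t\gen})$, giving the inclusion $e^{s\gen}e^{-t\gen}\subset e^{-t\gen}e^{s\gen}$; the inclusion is generally strict because $D(e^{-t\gen}e^{s\gen})=\{x:e^{s\gen}x\in R(e^{t\gen})\}$ can exceed $R(e^{t\gen})$. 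The descending chain of domains is then immediate from $e^{t'\gen}=e^{t\gen}e^{(t'-t)\gen}$, whence $R(e^{t'\gen})\subseteq R(e^{t\gen})$, i.e.\ $D(e^{-t'\gen})\subseteq D(e^{-t\gen})\subseteq H$ for $0<t<t'$.

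Finally, for the extension to $(s,t)\in\R\times\,]-\infty,0]$ I would write $e^{r\gen}$ for all $r\in\R$ (bounded for $r\ge0$, the unbounded inverse for $r<0$) and verify $e^{-s\gen}e^{-t\gen}=e^{-(s+t)\gen}$ by cases according to the signs of $s$ and of $s+t$, each time reducing to the forward law and factoring through the appropriate range. For $t\le0$ the factor $e^{-t\gen}$ is bounded, so the only subtlety is whether $e^{-(s+t)\gen}$ is bounded ($s+t\le0$) or unbounded ($s+t>0$); in the unbounded case the domain is again pinned down as the relevant range, which is where the bookkeeping is most delicate. I expect this domain accounting across the sign cases --- distinguishing genuine operator equality from mere inclusion --- to be the main technical obstacle, whereas the underlying identities are all consequences of the forward semigroup property and injectivity.
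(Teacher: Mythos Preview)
The paper does not supply its own proof of this proposition: it is quoted verbatim as \cite[Prop.\;2]{ChJo18ax} and left unproved here, so there is no in-paper argument to compare against. Your proposal is correct and follows exactly the route one would expect---and presumably the one in the cited source---namely, deducing everything from the forward semigroup law $e^{s\gen}e^{t\gen}=e^{(s+t)\gen}$ together with the injectivity from Proposition~\ref{inj-prop}, via the elementary fact $(ST)^{-1}=T^{-1}S^{-1}$ for injective bounded operators. Your domain bookkeeping is accurate: in particular, the verification that $D(e^{-s\gen}e^{-t\gen})=R(e^{(s+t)\gen})$ and the case split for the mixed-sign extension (using injectivity of $e^{-t\gen}$ to pass from $e^{-t\gen}x\in R(e^{s\gen})$ to $x\in R(e^{(s+t)\gen})$ when $s+t>0$) are the right moves. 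One minor remark: in the commutation paragraph you might add that the right-hand side $e^{-t\gen}e^{s\gen}$ has the larger domain $(e^{s\gen})^{-1}(R(e^{t\gen}))$, which by the forward law contains $R(e^{t\gen})$ but in general strictly (e.g.\ it is all of $H$ when $s\ge t$), confirming that the inclusion is typically proper.
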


\begin{remark}
  The domains $D(e^{-t\gen})$ of the inverses have been introduced independently in the literature
  on the regularisation of final value problems (albeit for $t>T$). A very recent example is \cite{Fur19}.
\end{remark}

\section{Evolution equations revisited}
  \label{ivp-sect}
This section outlines the prerequisites on evolution equations needed in Sections~\ref{fvp-sect}--\ref{Neu-sect}.
The material is entirely classical, and the treatise of Lions and Magenes  \cite{LiMa72} is chosen as the main source, although the below Theorem~\ref{HDA-thm} was not stated there (it is known nowadays in a more general setting).
\bigskip

The basic analysis is made for a
Lax--Milgram operator $A$ defined in $H$ from a $V$-coercive sesquilinear form $a(\cdot,\cdot)$ 
in a Gelfand triple, i.e.,
three separable, densely injected Hilbert spaces $V\hookrightarrow H\hookrightarrow V^*$ 
having the norms $\|\cdot\|$, $|\cdot|$ and $\|\cdot\|_*$, respectively. Hereby $V$ is the form
domain of $a$; and $V^*$ the antidual  of $V$.  
Specifically there are constants $C_j>0$ and $k\in\R$ 
such that all $u, v\in V$ satisfy $\| v\|_*\le C_1|v|\le C_2 \| v\|$ and 
\begin{equation} \label{coerciv-id}
  |a(u,v)|\le C_3\|u\|\,\|v\|\,\qquad \Re a(v,v)\ge C_4\|v\|^2-k|u|^2. 
\end{equation}
In fact, $D(A)$ consists of the $u\in V$ for
which $a(u,v)=\scal{f}{v}$   for some $f\in H$ and all $v\in V$; then $Au=f$.
Hereby $\scal{u}{v}$ denotes the inner product in $H$. 
There is also an extension $A\in\B(V,V^*)$ defined by the identity $\dual{Au}{v}=a(u,v)$ for all
$u,v\in V$. This is uniquely determined as $D(A)$ is dense in $V$.

Both $a$ and $A$ are referred to as $V$-elliptic if the above holds for $k=0$; then 
$A\in\B(V,V^*)$ is a bijection.
E.g.\ \cite{G09}, \cite{Hel13} or \cite{ChJo18ax} give more
details on the set-up and basic properties of the unbounded, but closed operator $A$ in $H$.
Throughout $D(A)$ is endowed with the graph norm, which is complete.

The operator $A$ is self-adjoint  if and only if $a(v,w)=\overline{a(w,v)}$, which is \emph{not} assumed.
 $A$ may also be nonnormal in general. For a non-trivial example based on the
advection-diffusion operators $-\partial_x^2\pm \partial_x$ with mixed Dirichlet, Neumann and Robin
conditions on an interval $\,]\alpha,\beta[\,$, one may consult \cite[Ex.\ 1]{JJ18logconv},
where both $D(A^*)\setminus D(A)\ne\emptyset$ and $D(A)\setminus D(A^*)\ne\emptyset$ are shown to hold.

\bigskip

In this set-up, the general Cauchy problem is, for given data  
$f\in L_2(0,T; V^*)$ and $u_0\in H$,
to determine the  $u\in{\cal D}'(0,T;V)$ (i.e.\ the space of continuous linear maps 
$C_0^\infty(]0,T[)\to V$, cf.\ \cite{Swz66}) satisfying
\begin{equation}
  \label{ivA-intro}
  \left.
  \begin{aligned}
  \partial_tu +Au &=f  &&\quad \text{in ${\cal D}'(0,T;V^*)$},
\\
  u(0)&=u_0 &&\quad\text{in $H$}.
\end{aligned}
\right\}
\end{equation}
By definition of Schwartz' vector distribution space ${\cal D}'(0,T;V^*)$, the first equation above means that 
for every scalar test function $\varphi\in C_0^\infty(]0,T[)$
the identity $\dual{u}{-\varphi'}+\dual{A u}{\varphi}=\dual{f}{\varphi}$ holds in $V^*$.

As is well known, a wealth of parabolic Cauchy problems with homogeneous boundary 
conditions have been treated via triples $(H,V,a)$ and the ${\cal D}'(0,T;V^*)$
framework in \eqref{ivA-intro}; cf.\ the
work of Lions and Magenes~\cite{LiMa72}, Tanabe~\cite{Tan79}, Temam~\cite{Tem84}, Amann
\cite{Ama95} etc. 

For the problem \eqref{ivA-intro}, it is classical to seek solutions $u$ in the Banach space 
\begin{equation}
  \begin{split}
  X=&L_2(0,T;V)\bigcap C([0,T];H) \bigcap H^1(0,T;V^*),
\\
  \|u\|_X=&\big(\int_0^T \|u(t)\|^2\,dt+\sup_{0\le t\le T}|u(t)|^2+\int_0^T (\|u(t)\|_*^2   +\|u'(t)\|_*^2)\,dt\big)^{1/2}.   
  \end{split}
  \label{eq:X}
\end{equation}
However, to point out a redundancy, note that 
the  Banach space $X$ can have its norm rewritten\,---\,using the Sobolev space 
$H^1(0,T;V^*)=\Set{u\in L_2(0,T;V^*)}{\partial_t u\in L_2(0,T;V^*)}$\,---\, in  the form
\begin{align}  \label{eq:Xnorm}
  \|u\|_{X} = \big(\|u\|^2_{L_2(0,T;V)} + \sup_{0 \leq t \leq T}|u(t)|^2 + \|u\|^2_{H^1(0,T;V^*)}\big)^{1/2},
\end{align}
Here there is a well-known inclusion $L_2(0,T;V)\cap H^1(0,T;V^*)\subset C([0,T];H)$ and an associated
Sobolev inequality for vector functions
$
  \sup_{0\le t\le T}| u(t)|^2\le (1+\frac{C_2^2}{C_1^2T})\int_0^T \|u(t)\|^2\,dt+\int_0^T \|u'(t)\|_*^2\,dt
$ (cf.\ \cite{ChJo18ax}).
Hence one can safely omit the space $C([0,T];H)$ in  \eqref{eq:X} and remove
$\sup_{[0,T]}|\cdot|$ from $\| \cdot\|_{X}$. Similarly $\int_0^T\|u(t)\|_*^2\,dt$ is redundant
in \eqref{eq:X} because $\|\cdot\|_*\le C_2\|\cdot\|$, so an equivalent norm on $X$ is given by
\begin{equation} \label{Xnorm-id}
  \vvvert u\vvvert_X =
  \big(\int_0^T \|u(t)\|^2\,dt +\int_0^T \|u'(t)\|_*^2\,dt\big)^{1/2}.   
\end{equation}
Thus $X$ is more precisely a Hilbertable space, as $V^*$ is so.
But the form given in \eqref{eq:X} serves the purpose of emphasizing the properties of the solutions.

For \eqref{ivA-intro} the following result is known from the work of Lions and Magenes \cite{LiMa72}:
 
\begin{proposition}  \label{LiMa-prop}
Let $V$ be a separable Hilbert space with $V \subset H$ algebraically, topologically and densely,
and let $A$ denote the Lax--Milgram operator induced by a $V$-coercive, bounded 
sesquilinear form $a(\cdot,\cdot)$ on $V$, as well as its extension $A\in\B(V,V^*)$.
To given $u_0 \in H$ and $f \in L_2(0,T; V^*)$ there exists
a uniquely determined solution $u$ belonging to $X$, cf.\ \eqref{eq:X},
of the Cauchy problem \eqref{ivA-intro}.

The solution operator $\mathcal{R}\colon (f,u_0)\mapsto u$ is bounded $L_2(0,T; V^*)\oplus H\to X$, 
and problem \eqref{ivA-intro} is well posed.
\end{proposition}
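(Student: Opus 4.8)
The plan is to follow the classical Faedo--Galerkin scheme of \cite{LiMa72}, after first removing the constant $k$ in \eqref{coerciv-id}. To this end I would substitute $u(t)=e^{kt}w(t)$, which transforms \eqref{ivA-intro} into the analogous problem for $w$ with $A$ replaced by $A+kI$ and $f$ by $e^{-kt}f$; the associated form then satisfies \eqref{coerciv-id} with $k=0$, so that $A+kI\in\B(V,V^*)$ is a bijection by $V$-ellipticity. Since $e^{\pm kt}$ is bounded on $[0,T]$, this change of unknown is a homeomorphism of $X$ onto itself and of the data space $L_2(0,T;V^*)\oplus H$ onto itself, so it suffices to treat the $V$-elliptic case. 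Equivalently one may invoke the translation identity \eqref{Amu-id}.

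For existence I would exploit the separability of $V$ to choose an increasing sequence of finite-dimensional subspaces $V_1\subset V_2\subset\cdots$ whose union is dense in $V$, and seek the Galerkin approximant $u_m(t)\in V_m$ solving $\scal{u_m'(t)}{v}+a(u_m(t),v)=\dual{f(t)}{v}$ for all $v\in V_m$, with $u_m(0)$ the $H$-orthogonal projection of $u_0$ onto $V_m$. In each $V_m$ this is a linear system of ordinary differential equations with $L_2$-coefficients, hence uniquely solvable with absolutely continuous $u_m$.

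The crux is the a priori estimate. Testing the Galerkin equation with $v=u_m(t)$ and using \eqref{coerciv-id} with $k=0$ together with Young's inequality in the form $\Re\dual{f}{u_m}\le\tfrac1{2C_4}\|f\|_*^2+\tfrac{C_4}2\|u_m\|^2$ yields, after integration in $t$, a bound on $u_m$ in $L_\infty(0,T;H)\cap L_2(0,T;V)$ that is uniform in $m$ and controlled by $|u_0|^2+\|f\|^2_{L_2(0,T;V^*)}$; the equation itself then bounds $u_m'$ in $L_2(0,T;V^*)$ since $A\in\B(V,V^*)$. Extracting a subsequence that converges weakly in $L_2(0,T;V)$ and weakly-$*$ in $L_\infty(0,T;H)$, with $u_m'$ converging weakly in $L_2(0,T;V^*)$, I would pass to the limit in the Galerkin relations tested against functions $\varphi(t)v$ to identify the limit $u$ as a solution of \eqref{ivA-intro}. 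Membership $u\in X$, in particular $u\in C([0,T];H)$, follows from the embedding $L_2(0,T;V)\cap H^1(0,T;V^*)\hookrightarrow C([0,T];H)$ recalled above, and the attainment $u(0)=u_0$ is read off by testing against a $\varphi$ with $\varphi(T)=0$ and integrating by parts in $t$.

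Uniqueness and the operator bound then come essentially for free: applying the same energy estimate to a solution with $f=0$ and $u_0=0$ forces $u\equiv0$, while the uniform estimate survives in the limit and gives $\|u\|_X\le C\big(\|f\|_{L_2(0,T;V^*)}+|u_0|\big)$, i.e.\ boundedness of $\mathcal{R}$ and hence well-posedness. The main obstacle I anticipate is the rigorous justification of the energy identity $\tfrac{d}{dt}|u(t)|^2=2\Re\dual{u'(t)}{u(t)}$ for the limit function, since $u'$ only lies in $V^*$; this is precisely the integration-by-parts rule in the Gelfand triple, valid because of the $C([0,T];H)$ embedding, and is the reason the estimates are first derived at the finite-dimensional Galerkin level, where the product rule is unproblematic.
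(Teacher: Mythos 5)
Your proposal is correct and follows essentially the route the paper itself takes: Proposition~\ref{LiMa-prop} is not proved in the text but attributed to Lions--Magenes, with exactly the translation trick $u=e^{kt}w$ (equivalently \eqref{Amu-id}) reducing the $V$-coercive case to the $V$-elliptic one and the boundedness of $\mathcal{R}$ deferred to \cite{JJ19cor}; your Faedo--Galerkin argument with the energy estimate is the standard proof behind that citation and yields the operator bound directly. One small caveat: the claim that the Galerkin equation bounds $u_m'$ uniformly in $L_2(0,T;V^*)$ is not automatic for an arbitrary increasing family $V_m$, since the equation is only tested against $v\in V_m$ and the $H$-orthogonal projections onto $V_m$ need not be uniformly bounded on $V$; either choose the $V_m$ from a basis orthogonal in both $H$ and $V$, or simply omit that step --- your passage to the limit against test functions $\varphi(t)v$ uses only the weak $L_2(0,T;V)$ convergence of $u_m$, after which $u'=f-Au\in L_2(0,T;V^*)$ follows from the limit equation itself.
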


The existence and uniqueness statements in Proposition~\ref{LiMa-prop}  
were mentioned after \cite[Sect.~3.4.4]{LiMa72},
where they indicated an extension to the coercive case  by means of a translation trick as  in \eqref{Amu-id}.
(Details may e.g.\ be found in \cite{JJ19cor} together with an explicit proof of boundedness of $\mathcal{R}$.)

As a note on the equation $u'+Au=f$ with $u\in X$, the continuous function
$u\colon[0,T]\to H$ fulfils $u(t)\in V$ for a.e.\ $t\in\,]0,T[\,$, so the extension $A\in
\B(V,V^*)$ applies for a.e.\ $t$. Hence $Au(t)$ belongs to $L_2(0,T;V^*)$.

\begin{remark} \label{generator-rem}
It is recalled that $\gen=-A$ generates an analytic semigroup $e^{-zA}$ in both
$\B(H)$ and $\B(V^*)$, each defined in $S_{\theta}$ for $\theta=\operatorname{arccot}(C_3/C_4)>0$, cf.\ \eqref{coerciv-id}.  
For $V$-elliptic $A$, these known generation results were concisely shown in \cite[Lem.\ 4]{ChJo18ax} e.g.
In the $V$-coercive case, this applies to $\gen=-(A+kI)$, so \eqref{Amu-id} gives
$e^{-zA}=e^{kz}e^{-z(A+kI)}$ for $z\ge0$; which then
defines $e^{-zA}$ by the right-hand side for every $z\in S_\theta$.
(An involved argument was given in \cite[Thm.\ 7.2.7]{Paz83} for uniformly
strongly elliptic differential operators.)
\end{remark}

In addition to the existence and uniqueness statements in Proposition~\ref{LiMa-prop}, 
it is useful to have an expression for the solution $u\in X$.
In view of Remark~\ref{generator-rem}, the canonical candidate is the Duhamel fomula,
\begin{equation} \label{u-id}
  u(t) = e^{-tA}u_0 + \int_0^t e^{-(t-s)A}f(s) \,ds \qquad\text{for } 0\leq t\leq T.
\end{equation}
This is known from analytic semigroup theory to \emph{produce} a classical solution,
 cf.\ the books of Pazy~\cite[Cor.\ 4.3.3]{Paz83}, Amann~\cite[Rem.~2.1.2]{Ama95} or Arendt~\cite{Are04},
provided that the $H$-valued source term $f(t)$  is  H{\"o}lder continuous of some order  $\sigma\in\,]0,1[\,$,
\begin{equation} \label{Holder-id}
  \sup\Set{|f(t)-f(s)|\cdot|t-s|^{-\sigma}}{0\le s<t\le T}<\infty.
\end{equation}
This was exploited for the Hadamard well-posedness of \eqref{heat-intro} shown in \cite{JJ19cor}. 

But in the present framework, Duhamel's formula \eqref{u-id} needs another justification,
because the $u\in X$ are solutions in the distribution sense, and the data $f\in L_2(0,T;V^*)$ need  not  be H\"older continuous.

However,  by virtue of Remark~\ref{generator-rem} and Proposition~\ref{inj-prop}, the operators $e^{-tA}$ are always injective, even when $A$ is merely $V$-coercive, and this allows an easy extension of the
classical integration factor technique, yielding a proof of  \eqref{u-id} for all $u_0\in H$ and $f\in L_2(0,T;V^*)$; cf.\ \cite{JJ19cor}. Thus one has

\begin{proposition}   \label{Duhamel-prop}
The unique solution $u$ in $X$ provided by Proposition~\ref{LiMa-prop} is given by Duhamel's
formula \eqref{u-id}, where each of the three terms belongs to $X$.
\end{proposition}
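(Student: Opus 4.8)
The plan is to show that the right-hand side of Duhamel's formula \eqref{u-id} \emph{is} the solution $u\in X$ already provided by Proposition~\ref{LiMa-prop}, so that by uniqueness the two coincide. Since existence, uniqueness and membership in $X$ are granted, the real task is an identification: I would verify that the candidate
\begin{equation*}
  w(t) = e^{-tA}u_0 + \int_0^t e^{-(t-s)A}f(s)\,ds
\end{equation*}
solves \eqref{ivA-intro} in the distribution sense, and separately that each of its three constituent pieces lies in $X$. Here the homogeneous part $e^{-tA}u_0$ is standard analytic-semigroup fare, and the two terms of the Duhamel integral should be handled by splitting the norm in $X$ (or the equivalent $\vvvert\cdot\vvvert_X$ from \eqref{Xnorm-id}) into its $L_2(0,T;V)$ and $H^1(0,T;V^*)$ contributions.

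First I would treat the data as smooth and exploit the integrating-factor idea that the excerpt flags as the key device. Because $e^{-tA}$ is injective for every $t>0$ by Remark~\ref{generator-rem} together with Proposition~\ref{inj-prop}, the inverse $e^{tA}=(e^{-tA})^{-1}$ exists as a closed (generally unbounded) operator, and one can test the equation against it. Concretely, for $u\in X$ solving $u'+Au=f$ in ${\cal D}'(0,T;V^*)$ one examines the $V^*$-valued map $t\mapsto e^{tA}u(t)$; formally its derivative is $e^{tA}(u'+Au)=e^{tA}f$, and integrating from $0$ to $t$ and applying $e^{-tA}$ reproduces \eqref{u-id}. To make this rigorous at the distributional level I would pair with a scalar test function $\varphi\in C_0^\infty(]0,T[)$ as in the definition following \eqref{ivA-intro}, using $\dual{u}{-\varphi'}+\dual{Au}{\varphi}=\dual{f}{\varphi}$, and move the (closed) factor $e^{tA}$ through the pairing; the injectivity is exactly what licenses cancelling $e^{-tA}$ at the end and thereby pins down the initial value $u(0)=u_0$ without ambiguity.

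The main obstacle I anticipate is the rigorous manipulation of the \emph{unbounded} operator $e^{tA}$ inside a distributional identity: one must ensure $e^{tA}u(t)$ is well defined and differentiable in the appropriate weak sense, and that the closedness of $e^{tA}$ permits exchanging it with the $t$-integration and with the $V^*$-pairing. The clean way around this is to approximate: either regularise $f$ (e.g.\ by convolution in $t$) so that the classical analytic-semigroup theory of \cite[Cor.~4.3.3]{Paz83} applies and \eqref{u-id} holds in the strong sense, then pass to the limit using the boundedness of the solution operator $\mathcal{R}$ from Proposition~\ref{LiMa-prop}; or work on $D(A)$-valued approximants and invoke density. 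Either route converts the formal integrating-factor computation into a limit of bona fide identities.

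Finally, for the membership claim I would estimate each term separately. The term $e^{-tA}u_0$ lies in $X$ by the smoothing of the analytic semigroup on $H$, with the $\gen e^{t\gen}$ bound \eqref{eta-est} controlling the $V$- and $V^*$-norms. For the convolution $\int_0^t e^{-(t-s)A}f(s)\,ds$, standard maximal-regularity-type estimates in the Gelfand-triple setting give its $L_2(0,T;V)\cap H^1(0,T;V^*)$ membership, whence the embedding $L_2(0,T;V)\cap H^1(0,T;V^*)\hookrightarrow C([0,T];H)$ recalled after \eqref{eq:Xnorm} places it in $X$; subtracting, the remaining piece $u - \int_0^t e^{-(t-s)A}f(s)\,ds = e^{-tA}u_0$ is then automatically in $X$ as well, which cross-checks the decomposition.
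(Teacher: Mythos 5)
Your overall strategy---identify the Duhamel expression with the unique $X$-solution of Proposition~\ref{LiMa-prop} via an integrating factor, made rigorous by approximation---is in the right spirit, but the specific integrating factor you choose is the step that fails. You propose to study $t\mapsto e^{tA}u(t)$ and to write $e^{tA}(u'+Au)=e^{tA}f$. Here $e^{tA}=(e^{-tA})^{-1}$ is defined only on $D(e^{tA})=R(e^{-tA})$, which is dense but a proper subspace whenever $A$ is unbounded (the range of $e^{-tA}$ lies in $\bigcap_{n}D(A^n)$ by analyticity, cf.\ \eqref{DAn-cnd}); there is no reason why $f(s)\in V^*$, or $u(t)$ itself \emph{before} \eqref{u-id} is established, should lie in this domain, so the expressions $e^{tA}f(s)$ and $e^{tA}u(t)$ are not merely delicate but undefined, and closedness of $e^{tA}$ gives no license to move it through the $V^*$-pairing. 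The classical device, and the one the paper relies on via \cite{JJ19cor}, is to fix $t$ and use the \emph{bounded} family $s\mapsto e^{-(t-s)A}$ on $[0,t]$ as integrating factor: then $\partial_s\big(e^{-(t-s)A}u(s)\big)=e^{-(t-s)A}f(s)$ involves only bounded operators, and integrating from $0$ to $t$ yields \eqref{u-id} directly, with no cancellation of $e^{-tA}$ required; the injectivity from Proposition~\ref{inj-prop} is what the paper emphasises as the ingredient that survives the passage from $V$-elliptic to $V$-coercive $A$, not a tool for letting the unbounded inverse act on the data.

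Your fallback---mollify $f$ in $t$, invoke \cite[Cor.~4.3.3]{Paz83} for the regularised problem, and pass to the limit using the boundedness of $\mathcal{R}$---is a viable alternative and would close the gap, but it is not free of charge: you must still identify Pazy's classical solution for the mollified data with the $X$-solution of Proposition~\ref{LiMa-prop}, which requires a uniqueness statement valid in a class containing both solution concepts, and that identification is usually proved by exactly the bounded integrating-factor computation above, so the approximation does not really circumvent it. On the membership claim, note also that the pointwise bound \eqref{eta-est} alone does not place $e^{-tA}u_0$ in $L_2(0,T;V)$ for $u_0\in H$ (it only gives $\|e^{-tA}u_0\|\lesssim t^{-1/2}|u_0|$, which is borderline non-square-integrable); the correct route is the energy identity, or simply the observation that $\int_0^te^{-(t-s)A}f(s)\,ds=\mathcal{R}(f,0)$ lies in $X$ by Proposition~\ref{LiMa-prop} and then to subtract, as you do in your final cross-check.
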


For the treatment of the heat equation further below, it is decisive to know that $u(t)\in D(A)$ in
order to make sense of the Neumann boundary condition. The well-known way to obtain this is to work with the more regular solutions that belong to the subspace $X_1\subset X$ given by
\begin{align}
  X_1 &= L_2(0,T;D(A)) \bigcap  H^1(0,T; H),
   \label{X1-eq}
\\
  \|u\|_{X_1}&= \big(\int_0^T\|u(t)\|^2_{D(A)}\,dt 
     +
                 \int_0^T|\partial_t u(t)|^2)\,dt  \Big)^{1/2}.
\end{align}
Here it is classical  that the intersection $L_2(0,T;D(A))\cap H^1(0,T;H)$ is
embedded into $C([0,T];U)$ for the interpolation space $U=[D(A),H]_{1/2}$; cf.\ Theorem 3.1 in \cite{LiMa72}.
 For the norm on this space, the reader is referred to the formula given in  \cite{LiMa72}, which exploits the spectral decomposition of self-adjoint operators in terms of direct Hilbert integrals. 

Along with the said inclusion, there is the estimate
\begin{equation} \label{CDAH-ineq}
  \sup_{t\in[0,T]} \|u(t)\|_{[D(A),H]_{1/2}}\le 
  c\big (\|u\|_{L_2(0,T;D(A))}+\|u\|_{H^1(0,T;H)}\big),
\end{equation}
which yields an equivalent norm on $X_1$ when added to $\|u\|_{X_1}$.
Thus one obtains the well-known  additional regularity property of solutions in $X_1$,
\begin{equation} \label{CDAH-id}
  u\in  C\big([0,T];[D(A),H]_{1/2}\big).
\end{equation}
Furthermore, the initial data $u_0$ must therefore be given in $[D(A),H]_{1/2}$.
That this space is compatible with the requirements that $u\in X_1$ is seen from (i)$\iff$(ii)  by taking $X=D(A)$, $Y=H$, $\theta=1/2$ and $a=u_0$ in the clarifying result  \cite[Thm.~I.10.1]{LiMa72}\,---\,although this had an unfortunate lack of quantifiers that may be remedied thus:

\begin{theorem}[\cite{LiMa72}] \label{LM101-thm}
  Let $X$, $Y$ be arbitrary complex Hilbert spaces with a continuous dense injection $X\hookrightarrow Y$. Then the
  following properties are equivalent for each $\theta\in\,]0,1[\,$ and $a\in Y$:
  \begin{alignat*}{2}
    \text{\upn{(i)}}\qquad&& &a\in [X,Y]_\theta;
\\
    \text{\upn{(ii)}}\qquad&& & a=u(0) \ \text{ for some $u\colon\R_+\to Y$ such that } t^{\theta-\frac12} u\in
    L_2(\R_+;X),\
    t^{\theta-\frac12} u'\in L_2(\R_+;Y);
\\
    \text{\upn{(iii)}}\qquad&& &t^{\theta-\frac12}\frac{e^{t\gen}a-a}t\in L_2(\R_+;Y)\ \text{ for 
    some generator $\gen$ of a $C_0$-semigroup in $Y$,}
\\
  && &\text{such that $D(\gen)=X$ holds with equivalent norms.}
  \end{alignat*}
In the affirmative case, \upn{(iii)} is valid for \emph{every} $C_0$-semigroup of the mentioned kind,
with equivalent norms
\begin{equation}  \label{norm-eqv}
  \|a\|_{[X,Y]_\theta}\qquad\text{and}\qquad \Big(\|a\|_Y^2+\int_0^\infty t^{2(\theta-\frac12)}\|\frac{e^{t\gen_0}a-a}t\|_Y^2\,dt\Big)^{1/2}
\end{equation}
whenever $\gen_0$ is a selfadjoint generator fulfilling \upn{(iii)} 
(these induce the  interpolation spaces $[X,Y]_\theta$).
\end{theorem}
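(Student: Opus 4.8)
The plan is to reduce the whole statement to scalar weighted integrals by means of the spectral theorem, after fixing one convenient selfadjoint generator, and then to transfer the resulting equivalence to arbitrary generators of the stated kind. First I would use the continuous dense injection $X\hookrightarrow Y$ to build a positive selfadjoint operator $\Lambda$ in $Y$, bounded below, with $D(\Lambda)=X$ and $\|a\|_X\simeq\|\Lambda a\|_Y$ (take $\Lambda=S^{1/2}$ for the positive selfadjoint $S$ in $Y$ representing the $X$-inner product, i.e.\ $\scal{u}{v}_X=\scal{Su}{v}_Y$). Then $\gen_0=-\Lambda$ is a selfadjoint generator of the analytic contraction semigroup $e^{t\gen_0}=e^{-t\Lambda}$ with $D(\gen_0)=X$, so it is a legitimate choice in \upn{(iii)}. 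By the very definition of the Lions--Magenes space via the direct Hilbert integral, $[X,Y]_\theta=D(\Lambda^{1-\theta})$ with
\[
  \|a\|_{[X,Y]_\theta}^2\simeq\int_{[c_0,\infty[}\lambda^{2(1-\theta)}\,d\|E_\lambda a\|_Y^2,
\]
where $E_\lambda$ is the spectral measure of $\Lambda$; this is the representation I would compute everything against.

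Next I would settle \upn{(i)}$\iff$\upn{(iii)} for this $\gen_0$ by a direct computation. Writing $e^{t\gen_0}a-a=\int(e^{-t\lambda}-1)\,dE_\lambda a$ and applying Tonelli's theorem to interchange the $t$- and $\lambda$-integrations gives
\[
  \int_0^\infty t^{2(\theta-\frac12)}\Big\|\tfrac{e^{t\gen_0}a-a}{t}\Big\|_Y^2\,dt
  =\int_{[c_0,\infty[}\Big(\int_0^\infty t^{2\theta-3}(e^{-t\lambda}-1)^2\,dt\Big)\,d\|E_\lambda a\|_Y^2.
\]
The substitution $s=t\lambda$ factors the inner integral as $c_\theta\,\lambda^{2(1-\theta)}$ with $c_\theta=\int_0^\infty s^{2\theta-3}(e^{-s}-1)^2\,ds$, and checking the two endpoints shows $c_\theta$ is finite \emph{exactly} for $\theta\in\,]0,1[\,$ (near $s=0$ the integrand is $O(s^{2\theta-1})$, forcing $\theta>0$; near $s=\infty$ it is $O(s^{2\theta-3})$, forcing $\theta<1$). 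Hence the displayed integral equals $c_\theta\|a\|_{[X,Y]_\theta}^2$, which both proves \upn{(i)}$\iff$\upn{(iii)} for $\gen_0$ and, after adding $\|a\|_Y^2$ and using that $\lambda$ stays bounded below, yields the norm equivalence \eqref{norm-eqv}. The same spectral calculus, now with the spectral measure of an \emph{arbitrary} selfadjoint generator $\gen_0$, shows the inner integral is finite iff the spectrum lies in $\,]-\infty,0]$, so that ``fulfilling \upn{(iii)}'' pins down precisely the selfadjoint generators for which \eqref{norm-eqv} holds and induces $[X,Y]_\theta$.

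For \upn{(i)}$\iff$\upn{(ii)} I would again use the orbit of $\gen_0$. Taking $u(t)=e^{-t\Lambda}a$ gives $u(0)=a$, and the two requirements in \upn{(ii)} reduce, via the spectral measure and the Gamma-integral $\int_0^\infty t^{2\theta-1}e^{-2t\lambda}\,dt=\Gamma(2\theta)(2\lambda)^{-2\theta}$, to the single condition $\int\lambda^{2(1-\theta)}\,d\|E_\lambda a\|_Y^2<\infty$, i.e.\ to \upn{(i)}; this proves \upn{(i)}$\Rightarrow$\upn{(ii)}. The converse \upn{(ii)}$\Rightarrow$\upn{(i)} is Lions' trace theorem: for an \emph{arbitrary} $u$ with trace $a$ and the stated weighted integrability one must bound $\|a\|_{[X,Y]_\theta}$ from above. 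Spectrally this is a Hardy-type inequality applied in each spectral fibre, using $a=u(0)=u(t)-\int_0^t u'(s)\,ds$ together with the weights, and I would obtain it by the standard trace-method estimate of \cite{LiMa72}.

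The genuinely delicate point---and the one the flagged ``lack of quantifiers'' concerns---is the assertion that in the affirmative case \upn{(iii)} holds for \emph{every} generator of the stated kind, not merely for $\gen_0$. The hard part will be to compare $e^{t\gen}$ with $e^{t\gen_0}$ for a general $\gen$ with $D(\gen)=X$: I would split $\int_0^\infty=\int_0^1+\int_1^\infty$, control the tail $\int_1^\infty$ by boundedness of the semigroup (which makes it finite for all $a\in Y$ since the weight $t^{2\theta-3}$ is integrable at infinity for $\theta<1$), and for $\int_0^1$ invoke the classical semigroup characterisation of real interpolation spaces, by which $\int_0^1 t^{2\theta-3}\|e^{t\gen}a-a\|_Y^2\,dt$ is, modulo $\|a\|_Y$, equivalent to $\|a\|_{[X,Y]_\theta}^2$ \emph{independently} of which semigroup with generator-domain $X$ is used, since near $t=0$ the increment $e^{t\gen}a-a$ is governed only by $D(\gen)=X$. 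This independence, together with the explicit selfadjoint computation above, is exactly what upgrades the single-generator equivalence to the ``for every'' statement and fixes the correct quantifiers.
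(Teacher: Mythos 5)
Your proposal is correct in substance, and its spectral computations for the fixed selfadjoint generator $\gen_0=-\Lambda$ — the Tonelli interchange factoring the double integral through $c_\theta\lambda^{2(1-\theta)}$, the endpoint analysis pinning down $\theta\in\,]0,1[\,$, and the Gamma-integral for the orbit $e^{-t\Lambda}a$ — are exactly right; they give (i)$\iff$(iii) for $\gen_0$ together with the norm equivalence \eqref{norm-eqv}, and this part coincides with what the paper extracts from \cite{LiMa72}. Where you genuinely diverge is in how the quantifier ``for \emph{every} semigroup of the mentioned kind'' is obtained. The paper (following Lions--Magenes) pivots on condition (ii), which mentions no semigroup: one proves (ii)$\implies$(iii) for an \emph{arbitrary} admissible generator $\gen$ by splitting $e^{t\gen}a-a=e^{t\gen}(a-u(t))+(e^{t\gen}u(t)-u(t))+(u(t)-a)$, using $a-u(t)=-\int_0^t u'(s)\,ds$ and $e^{t\gen}v-v=\int_0^t e^{s\gen}\gen v\,ds$ for $v\in X$, and controlling the resulting weighted averages by the Hardy--Littlewood--P\'olya inequality; (iii)$\implies$(ii) for a specific $u$ then closes the loop, so the ``for every'' statement falls out because (ii) is semigroup-free. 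You instead transfer from $\gen_0$ to a general $\gen$ by citing the classical semigroup characterisation of $(Y,X)_{1-\theta,2}$ and its independence of the chosen semigroup. That citation is legitimate (Butzer--Berens, Lions--Peetre, Triebel), but it is essentially the statement being proved, so as written your argument outsources precisely the delicate step you yourself flagged; to make it self-contained you would need to prove that independence, e.g.\ via the $K$-functional with splitting element $\frac1t\int_0^t e^{s\gen}a\,ds$, or reproduce the Hardy-inequality argument just described. One further caveat: your tail estimate over $\int_1^\infty$ silently assumes the semigroup is \emph{bounded}; for a type $(M,\omega)$ semigroup with $\omega>0$ your own spectral calculation shows that (iii) can fail even for $a\in X$, so ``the mentioned kind'' must be read as bounded semigroups (as in the hypotheses of \cite{LiMa72}) for the ``for every'' clause to be true at all — worth stating explicitly rather than leaving implicit.
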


Indeed, one may readily see that the proof given  in \cite{LiMa72} (where
$\alpha:=\theta-\frac12$) achieves via the Hardy--Littlewood--Polya inequality 
 that (ii)$\implies$(iii) for \emph{arbitrary} 
$C_0$-semigroups fulfilling the criteria in (iii).
Then the proof there shows that when $\gen$ satisfies (iii), then (ii) holds for a specific
function $u$. Finally equivalence of (i) and (iii) is verified if one takes for ${\gen_0}$
any of the generators that can be utilised in the definition of $[X,Y]_\theta$, whereby the equivalence
of the norms also is obtained.

Thus prepared, it is now possible to formulate the following  regularity result, which is well known among experts, but here given in a form suitable for  the purposes in Section~\ref{fvp-sect}--\ref{Neu-sect}:

\begin{theorem} \label{HDA-thm}
For the unique solution $u\in X$ of the Cauchy problem \eqref{ivA-intro} for given data $(f,u_0)\in Y$, the
following additional properties are equivalent:
\begin{align*}
  \text{\upn{(i)}}\qquad f&\in L_2(0,T;H),\qquad\quad\ \ \;  u_0\in [D(A),H]_{1/2}    
\\  \label{CHDA-eq}
  \text{\upn{(ii)}}\qquad u&\in L_2(0,T;D(A))\bigcap C([0,T];[D(A),H]_{1/2})\bigcap H^1(0,T;H).
\end{align*}
In the affirmative case all terms in Duhamel's formula \eqref{u-id} belong to the space in
\upn{(ii)}, and there is a constant $c>0$ such that each solution $u$ corresponding to data $(f,u_0)$
satisfying \upn{(i)} will fulfil
\begin{equation} \label{ivp-est}
  \int_0^T\big(|u(t)|^2+|Au(t)|^2+|u'(t)|^2\big)\,dt\le c
  \Big(\int_0^T|f(t)|^2\,dt +\|u_0\|_{[D(A),H]_{1/2}}^2\Big).
\end{equation}
If $A^*=A$ in $H$, the form domain $V$ identifies with
the space $[D(A),H]_{1/2}$ in \upn{(i)}, \upn{(ii)} and \eqref{ivp-est} above.
\end{theorem}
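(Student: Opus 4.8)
The plan is to establish the equivalence (i)$\iff$(ii) by two separate implications, reserving the estimate \eqref{ivp-est} and the self-adjoint identification for the end, and throughout to exploit the splitting of the solution furnished by Duhamel's formula \eqref{u-id} (Proposition~\ref{Duhamel-prop}). The implication (ii)$\implies$(i) is immediate: if $u\in L_2(0,T;D(A))\cap H^1(0,T;H)$, then $Au\in L_2(0,T;H)$ and $u'\in L_2(0,T;H)$, so that $f=u'+Au\in L_2(0,T;H)$; moreover the embedding \eqref{CDAH-id} into $C([0,T];[D(A),H]_{1/2})$ makes the trace well defined, and since $u(0)=u_0$ in $H$ this shows $u_0\in[D(A),H]_{1/2}$.

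For the substantial direction (i)$\implies$(ii) I would handle the two Duhamel terms in \eqref{u-id} separately. For the integral term $u_{\mathrm{inh}}(t)=\int_0^t e^{-(t-s)A}f(s)\,ds$, which solves the zero-initial-state problem with source $f\in L_2(0,T;H)$, I would invoke the classical maximal $L_2$-regularity theorem of de~Simon for generators of bounded analytic semigroups on a Hilbert space; the coercive case reduces to the $V$-elliptic one, where by Remark~\ref{generator-rem} $-(A+\lambda I)$ is bounded analytic for $\lambda\ge k$ (cf.\ \eqref{coerciv-id}), via the substitution $v=e^{-\lambda t}u_{\mathrm{inh}}$, which only multiplies the source by a factor bounded on $[0,T]$. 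This gives $u_{\mathrm{inh}}\in L_2(0,T;D(A))\cap H^1(0,T;H)$. For the semigroup term $e^{-tA}u_0$, I would use $u_0\in[D(A),H]_{1/2}$ together with Theorem~\ref{LM101-thm} applied with $X=D(A)$, $Y=H$, $\theta=\tfrac12$ (so the weight $t^{\theta-1/2}\equiv1$): its clause (ii) yields a lifting $w\colon\R_+\to H$ with $w(0)=u_0$, $w\in L_2(\R_+;D(A))$ and $w'\in L_2(\R_+;H)$, whose restriction to $]0,T[$ lies in $L_2(0,T;D(A))\cap H^1(0,T;H)$. Setting $g:=w'+Aw\in L_2(0,T;H)$, the function $w$ solves the Cauchy problem with data $(g,u_0)$, so Duhamel gives $e^{-tA}u_0=w-\int_0^t e^{-(t-s)A}g(s)\,ds$, in which the second piece is again covered by the maximal-regularity step. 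Thus both terms of \eqref{u-id}, and hence $u$, lie in $L_2(0,T;D(A))\cap H^1(0,T;H)$, and \eqref{CDAH-id} upgrades this to $C([0,T];[D(A),H]_{1/2})$, which is (ii).

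For the estimate \eqref{ivp-est} I would appeal to the closed graph theorem. The solution operator $\mathcal R\colon(f,u_0)\mapsto u$ of Proposition~\ref{LiMa-prop} is bounded $L_2(0,T;V^*)\oplus H\to X$, and the equivalence just proved shows that its restriction maps the Banach space $L_2(0,T;H)\oplus[D(A),H]_{1/2}$ into the Banach space appearing in (ii). Since these two spaces inject continuously into $L_2(0,T;V^*)\oplus H$ and into $X$, respectively, uniqueness of solutions forces the restricted map to have closed graph, hence to be bounded; writing this boundedness out, and absorbing $\int_0^T|u|^2\,dt$ into the $D(A)$-graph norm, is exactly \eqref{ivp-est}.

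Finally, in the self-adjoint case $A^*=A$ I would note that $A+kI$ is positive self-adjoint, with form domain $D((A+kI)^{1/2})$ by the second representation theorem, while the standard interpolation identity for positive self-adjoint operators gives $[D(A),H]_{1/2}=[D(A+kI),H]_{1/2}=D((A+kI)^{1/2})$; as $V$ is unchanged under the bounded perturbation $kI$, this yields $V=[D(A),H]_{1/2}$. I expect the inhomogeneous maximal-regularity step to be the main obstacle: for general coercive, possibly nonnormal, $A$ it cannot be read off from spectral theory and genuinely rests on the Hilbert-space ($L_2$-in-time) nature of the problem. The role of the lifting $w$ is precisely to reduce the initial-state contribution to this one step, thereby bypassing the delicate Hardy-type bounds on $Ae^{-tA}u_0$ near $t=0$.
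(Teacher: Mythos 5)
Your proposal is correct and follows essentially the same route as the paper: the reduction of the initial-state term to the case $u_0=0$ via the lifting in Theorem~\ref{LM101-thm}, an appeal to classical maximal $L_2$-regularity for the inhomogeneous problem with zero initial data (the paper cites \cite[Sect.~IV.3]{LiMa72} where you cite de~Simon, but these are the same fact), the Closed Graph Theorem for \eqref{ivp-est}, and spectral theory for $A+kI$ in the self-adjoint case. Your explicit identity $e^{-tA}u_0=w-\int_0^t e^{-(t-s)A}(w'+Aw)(s)\,ds$ merely spells out the reduction that the paper leaves implicit.
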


This result was not formulated by Lions and Magenes~\cite{LiMa72} but undoubtedly known to them. For a brief discussion,  note that (ii)$\implies$(i) is obvious from the mapping properties of $\cal{P}(u)=(u'+Au, u(T))$. As for (i)$\implies$(ii), it is clear from Theorem~\ref{LM101-thm} that it suffices to treat the case $u_0=0$ for arbitrary $f$ in $L_2(0,T;H)$; this was  done in \cite[Sect.~IV.3]{LiMa72} using the general theory of Laplace transformation of vector distributions and convolutions $\cal{G}*(A+\frac\partial{\partial t})$ exposed in \cite{Swz52} or \cite[Ch.~VIII]{Swz66}. (One can also make a version based only on the semigroup $e^{-tA}$.) Consequently also the last term $e^{-tA}u_0$ in \eqref{u-id} belongs to the space $X_1$ in (ii). The estimate is seen from the Closed Graph Theorem, since $\cal R$ is 
the inverse of $\cal P$, which is bounded from $X_1$ to $L_2(0,T;H)\oplus [D(A),H]_{1/2}$.

The final fact that $V=[D(A),H]_{1/2}$ holds is also standard when $A^*=A\ge0$ (both spaces equal $D(A^{1/2})$ then, by spectral theory), and this applies in the general self-adjoint $V$-coercive case to $A+k'I$ for $k'>k$, which has the same domain and form domain as $A$.

\begin{remark}  \label{maxreg-rem}
  The estimate in Theorem~\ref{HDA-thm} has been known for decades by experts in evolution equations.
More generally, with $H$ replaced by a UMD Banach space $B$, and $H^1(0,T; H)$ replaced by the $L_p$-Sobolev space $W^1_p(0,T;B)$, $1<p<\infty$, it has been a major theme, known as maximal regularity, since the 1980's to establish that $u\mapsto u'+Au$  as a map $W^1_p(0,T;B)\cap L_p(0,T;D(A))\to L_p(0,T;B)$ has a bounded inverse. This was first shown by Dore and Venni~\cite{DoVe87} under suitable assumptions, but this generality is not needed here
(though for $p=2$ the choices $B=V^*$ and $B=H$  give back Proposition~\ref{LiMa-prop} and Theorem~\ref{HDA-thm} with $u_0=0$).
The reader may consult \cite{Ama95, DHP03}, or  for a survey of maximal regularity also \cite[Sec.~5]{Are04}.
\end{remark}

To complete this review of linear evolution equations,
it is natural to introduce a notation for the full \emph{yield} of the source term
$f\colon \,]0,T[\, \to V^*$, namely the following vector that a priori belongs to $V^*$
\begin{equation} \label{yf-eq}
  y_f= \int_0^T e^{-(T-t)A}f(t)\,dt.
\end{equation}
In fact $y_f\in H$ as by Duhamel's formula it equals 
the final state of a solution in $C([0,T],H)$ of a Cauchy problem having $u_0=0$. 
Moreover, Theorem~\ref{HDA-thm} shows that $y_f\in [D(A),H]_{1/2}$ whenever $f\in L_2(0,T;H)$.

For $t=T$ formula \eqref{u-id} now obviously yields a 
\emph{bijection} $u(0)\longleftrightarrow u(T)$ between the initial
and terminal states (for fixed $f$), as one can solve for $u_0$ by means of the inverse $e^{TA}$. 
Indeed, all terms in \eqref{u-id} belong to $C([0,T];H)$, so evaluation at $t=T$
gives $u(T)=e^{-TA}u(0)+y_f$; cf.\ \eqref{yf-eq}. This is a flow map 
\begin{equation} \label{flow-id}
  u(0)\mapsto u(T).  
\end{equation}
Invoking injectivity of $e^{-TA}$ once again, and that \eqref{u-id} implies
$u(T)-y_f=e^{-TA}u(0)$, which clearly belongs to $D(e^{TA})$, the flow is inverted  by
\begin{equation}  \label{u0uT-id}
  u(0)=e^{TA}(u(T)-y_f).  
\end{equation}
In other words, not only are the solutions in $X$ to $u'+Au=f$ parametrised by the initial
states $u(0)$ in $H$ (for fixed $f$) according to Proposition~\ref{LiMa-prop}, but also the final
states $u(T)$ are parametrised by the $u(0)$. 

For one thing, this means that the differential equation $u'+Au=f$ has the 
\emph{backward uniqueness} property regardless of whether $A$ itself is injective or not:
that is, $u(t)=0$ holds in $H$ for all $t\in[0,T[\,$ if $u(T)=0$.
This property has been studied for decades in various situations, cf.\ Remark~\ref{Yosida-rem} and
Remark~\ref{LM-rem} below.

Secondly, the remarks on the above flow lead to the isomorphic well-posedness in the next section.

\section{Final value problems with coercive generators}
\label{fvp-sect}

In the framework of Section~\ref{ivp-sect}, the general final value problem is, for given data  
$f\in L_2(0,T; V^*)$ and $u_T\in H$,
to determine the  $u\in{\cal D}'(0,T;V)$ satisfying
\begin{equation}
  \label{fvA-intro}
  \left.
  \begin{aligned}
  \partial_tu +Au &=f  &&\quad \text{in ${\cal D}'(0,T;V^*)$},
\\
  u(T)&=u_T &&\quad\text{in $H$}.
\end{aligned}
\right\}
\end{equation}
The point of departure for this is to make a comparison of \eqref{fvA-intro} with the corresponding Cauchy
problem for the equation $u'+Au=f$, cf.\ \eqref{ivA-intro}.
Thus it would be natural to seek solutions $u$ in the same space $X$ in \eqref{eq:X}.
As shown first for the $V$-elliptic case in \cite{ChJo18ax}, this is possible only for data 
$(f, u_T)$ subjected to certain \emph{compatibility conditions}, which have a special form for final value problems.

The compatibility condition is formulated by means of the inverse $e^{tA}$ that  enters the theory through its
domain $D(e^{tA})$, to which Proposition~\ref{inverse-prop} applies. Although this identifies with
the range $R(e^{-tA})$ in the algebraic sense, it has the virtue of being a Hilbert space under 
the graph norm $\|u\|=(|u|^2+|e^{tA}u|^2)^{1/2}$.

The remarks on $y_f$ made after \eqref{yf-eq} make it clear that in the following
general result the difference in \eqref{eq:cc-intro} is a priori a member of  $H$.
The theorem relaxes the assumption of $V$-ellipticity in \cite{ChJo18,ChJo18ax} to $V$-coercivity.
Because of its relative novelty,
it is given here with details for the reader's sake.

\begin{theorem}[\cite{JJ19cor}] \label{fvp-thm}
  Let $A$ be a $V$-\emph{coercive} Lax--Milgram operator defined from a triple $(H,V,a)$ as above.
  The abstract final value problem \eqref{fvA-intro} then has a solution $u(t)$ belonging to the space
  $X$ in \eqref{eq:X} if, and only if, the data $(f,u_T)$ belong to the Banach space $Y$, which is
  the subspace 
  \begin{equation}
    Y\subset L_2(0,T; V^*)\oplus H
  \end{equation}
  defined by the compatibility condition  
  \begin{equation}
    \label{eq:cc-intro}
    u_T-\int_0^T e^{-(T-t)A}f(t)\,dt \ \in\  D(e^{TA}).
  \end{equation}  
In the affirmative case, the solution $u$ is uniquely determined in $X$ and 
\begin{equation}
  \label{eq:Y-intro}
      \|u\|_{X} \le
  c
  \Big(|u_T|^2+\int_0^T\|f(t)\|_*^2\,dt+\Big|e^{TA}\big(u_T-\int_0^Te^{-(T-t)A}f(t)\,dt\big)\Big|^2\Big)^{1/2}
  =:c \|(f,u_T)\|_Y,
\end{equation}
whence the solution operator $(f,u_T)\mapsto u$ is continuous $Y\to X$. Moreover,
\begin{equation} \label{eq:fvp_solution}
  u(t) = e^{-tA}e^{TA}\Big(u_T-\int_0^T e^{-(T-t)A}f(t)\,dt\Big) + \int_0^t e^{-(t-s)A}f(s) \,ds,
\end{equation}
where all terms belong to $X$ as functions of $t\in[0,T]$, and the difference in
\eqref{eq:cc-intro} equals $e^{-TA}u(0)$ in $H$.
\end{theorem}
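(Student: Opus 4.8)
The plan is to reduce the final value problem \eqref{fvA-intro} entirely to the Cauchy problem \eqref{ivA-intro}, exploiting the bijection between initial and terminal states recorded in \eqref{flow-id}--\eqref{u0uT-id}. The pivotal observation is that any $u\in X$ solving \eqref{fvA-intro} is automatically the solution of \eqref{ivA-intro} for the initial datum $u_0=u(0)\in H$, which is well defined because $X\hookrightarrow C([0,T];H)$; Proposition~\ref{Duhamel-prop} then applies and represents $u$ by Duhamel's formula \eqref{u-id}. Conversely, given compatible data I would manufacture a suitable $u_0$ and invoke Proposition~\ref{LiMa-prop}. Throughout I would use that the analytic semigroup $e^{-tA}$ from Remark~\ref{generator-rem} is injective by Proposition~\ref{inj-prop}, so that $e^{tA}=(e^{-tA})^{-1}$ is a well-defined closed operator with $D(e^{TA})=R(e^{-TA})$ and $e^{-TA}e^{TA}=I$ on $D(e^{TA})$, $e^{TA}e^{-TA}=I$ on $H$.

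For \emph{necessity}, I would suppose $u\in X$ solves \eqref{fvA-intro} and evaluate \eqref{u-id} at $t=T$, which is legitimate since all three terms lie in $C([0,T];H)$; this yields $u_T=u(T)=e^{-TA}u(0)+y_f$ with $y_f$ as in \eqref{yf-eq}. Hence $u_T-y_f=e^{-TA}u(0)\in R(e^{-TA})=D(e^{TA})$, which is precisely the compatibility condition \eqref{eq:cc-intro}, and at the same time identifies the difference in \eqref{eq:cc-intro} as $e^{-TA}u(0)$. \emph{Uniqueness} would then follow at once: if $u,v\in X$ both solve \eqref{fvA-intro}, then $w=u-v$ solves the homogeneous Cauchy problem and satisfies $e^{-TA}w(0)=w(T)=0$, so $w(0)=0$ by injectivity, whence $w\equiv0$ by the uniqueness in Proposition~\ref{LiMa-prop}.

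For \emph{sufficiency}, I would assume $(f,u_T)\in Y$, i.e.\ $u_T-y_f\in D(e^{TA})$, and set $u_0:=e^{TA}(u_T-y_f)\in H$ as in \eqref{u0uT-id}. Proposition~\ref{LiMa-prop} then furnishes a unique $u\in X$ solving \eqref{ivA-intro} with this $u_0$, and Proposition~\ref{Duhamel-prop} gives \eqref{u-id}; substituting $u_0$ into \eqref{u-id} produces exactly the asserted formula \eqref{eq:fvp_solution}, all of whose terms lie in $X$. Evaluating at $t=T$ and using $e^{-TA}e^{TA}=I$ on $D(e^{TA})$ gives $u(T)=(u_T-y_f)+y_f=u_T$, so $u$ solves \eqref{fvA-intro}. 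The estimate \eqref{eq:Y-intro} I would then read off from the boundedness of the solution operator $\mathcal{R}$ in Proposition~\ref{LiMa-prop}, since $\|u\|_X\le\|\mathcal{R}\|\,(|u_0|^2+\|f\|_{L_2(0,T;V^*)}^2)^{1/2}$ and $|u_0|=|e^{TA}(u_T-y_f)|$, both squared terms being dominated by $\|(f,u_T)\|_Y^2$.

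Finally, to see that $Y$ is complete under the norm in \eqref{eq:Y-intro}, I would use that $f\mapsto y_f$ is bounded $L_2(0,T;V^*)\to H$ (being the composition of $\mathcal{R}(\cdot,0)$ with evaluation at $T$) together with the closedness of $e^{TA}$: for a Cauchy sequence $(f_n,u_{T,n})$ in $Y$ the first two norm terms force $f_n\to f$ and $u_{T,n}\to u_T$, whence $u_{T,n}-y_{f_n}\to u_T-y_f$ in $H$, while the third term makes $e^{TA}(u_{T,n}-y_{f_n})$ convergent; closedness then places $u_T-y_f$ in $D(e^{TA})$ with the expected limit, so the limit lies in $Y$ with convergence in the $Y$-norm. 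The only genuinely delicate point will be the bookkeeping for the unbounded inverse $e^{TA}$\,---\,ensuring that each application lands in the correct domain and that the cancellations $e^{\mp TA}e^{\pm TA}$ are invoked only where valid\,---\,but this is controlled entirely by Proposition~\ref{inverse-prop} and the domain identity $D(e^{TA})=R(e^{-TA})$.
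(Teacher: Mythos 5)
Your proposal is correct and follows essentially the same route as the paper: necessity and the identification of the difference in \eqref{eq:cc-intro} via evaluation of Duhamel's formula at $t=T$, sufficiency by constructing $u_0=e^{TA}(u_T-y_f)$ and invoking Proposition~\ref{LiMa-prop}, and the estimate by substituting this $u_0$ into the bound for $\mathcal{R}$. Your uniqueness argument (injectivity applied to the difference of two solutions) is a harmless variant of the paper's observation that every solution must coincide with the right-hand side of \eqref{eq:fvp_solution}, and your completeness check for $Y$ matches the paper's closed-operator argument given just after the proof.
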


\begin{proof}
If \eqref{fvA-intro} is solved by $u \in X$, then $u(T)=u_T$ is reached from
the unique initial state $u(0)$ in \eqref{u0uT-id}. But the argument  for \eqref{u0uT-id}
showed that $u_T-y_f = e^{-TA} u(0)\in D(e^{TA})$, 
so \eqref{eq:cc-intro} is necessary. 

Given data $(f,u_T)$  fulfilling \eqref{eq:cc-intro},
then $u_0 = e^{TA}(u_T - y_f)$ is a well-defined vector in $H$, so 
Proposition~\ref{LiMa-prop} yields a
function $u\in X$ solving $u' +Au = f$ and $u(0)=u_0$. 
By \eqref{flow-id},
this $u(t)$ clearly has final state $u(T)=e^{-TA}e^{TA}(u_T-y_f)+y_f=u_T$, hence satisfies both equations in \eqref{fvA-intro}.
Thus \eqref{eq:cc-intro} suffices for solvability.

In the affirmative case, \eqref{eq:fvp_solution} results for any solution $u\in X$ by inserting
formula \eqref{u0uT-id} for $u(0)$ into \eqref{u-id}. 
Uniqueness of $u$ in $X$ is seen from the  right-hand side of \eqref{eq:fvp_solution}, where all
terms depend only on the given $f$, $u_T$, $A$ and $T>0$. 
That each term in \eqref{eq:fvp_solution}
is a function belonging to $X$ was seen in Proposition~\ref{Duhamel-prop}.

Moreover, the solution can be estimated in $X$ by substituting the expression \eqref{u0uT-id} for $u_0$
into the inequality that expresses the boundedness of $\mathcal{R}$ in Proposition~\ref{LiMa-prop},
\begin{equation}
 \vvvert u\vvvert_X^2 \leq c\big(|u_0|^2 + \int_0^T\|f(s)\|_{*}^2\,ds\big)
 \le c(|e^{TA}(u_T-y_f)|^2+\|f\|_{L_2(0,T;V^*)}^2).
\end{equation}
Here one may add $|u_T|^2$ on the right-hand side to arrive at the expression for $\|(f,u_T)\|_Y^2$  in \eqref{eq:Y-intro}.
\end{proof}

\begin{remark} \label{P-rem}
  It is clear from the definitions and proofs that $\cal{P}u=(u'+ Au, u(T))$ is 
bounded $X\to Y$.
The statement in Theorem~\ref{fvp-thm} means that the solution operator $\cal{R}(f,u_T)=u$
is well defined, bounded  and satisfies $\cal{P}\cal{R}=I$; but by the uniqueness also $\cal{R}\cal{P}=I$
holds. Hence $\cal{R}$ is a linear homeomorphism $Y\to X$.
\end{remark}

The norm on the data space $Y$ in \eqref{eq:Y-intro} is seen at once to be the graph norm of the
composite map 
\begin{equation}
   (f,u_T)\ \mapsto u_T-y_f\ \mapsto \ e^{TA}(u_T-y_f)
\end{equation}
that  in terms
of the first part $\Phi(f,u_T)=u_T-y_f$ is the operator
\begin{equation} \label{EFi-id}
  L_2(0,T; V^*)\oplus H \xrightarrow[\;]{\quad \Phi\quad} H\xrightarrow[\;]{\quad e^{TA}\quad} H.
\end{equation}
In fact,  the solvability criterion \eqref{eq:cc-intro} is met if and only if $e^{TA}\Phi$ is defined at
$(f,u_T)$, so the data space $Y$ is its domain. Being an inverse, $e^{TA}$ is a closed operator in $H$; 
hence $e^{TA}\Phi$ is closed, and $Y=D(e^{TA}\Phi)$ is complete. Now, since in \eqref{eq:Y-intro} the Banach space $V^*$
is Hilbertable, so is $Y$.

In control theoretic terms, the role of $e^{TA}\Phi$ is also for $V$-coercive $A$ to provide the unique initial state 
$u(0)=e^{TA}\Phi(f,u_T)=e^{TA}(u_T-y_f)$,
which is steered by $f$ to the final state $u(T)=u_T$ at time $T$;
cf.\ the Duhamel formula \eqref{u-id}.

Criterion \eqref{eq:cc-intro} is a generalised \emph{compatibility} condition on
the data $(f,u_T)$; such conditions have long been known in the theory of parabolic problems, cf.\
Remark~\ref{GS-rem}.
The presence of $e^{-(T-t)A}$ and the integral over $[0,T]$ makes
\eqref{eq:cc-intro} \emph{non-local} in both space and time. This 
aspect is further complicated by the reference to the abstract domain $D(e^{TA})$, which for larger
final times $T$ typically gives increasingly stricter conditions:

\begin{proposition}
If the spectrum $\sigma(A)$ of $A$ is not contained in the strip
$\set{z\in\C}{-k\le \Re z\le k}$, whereby $k$ is the constant from \eqref{coerciv-id}, then 
the domains $D(e^{tA})$ form a strictly descending chain, that is,
\begin{equation} \label{dom-intro}
 H\supsetneq D(e^{tA})\supsetneq D(e^{t' A}) \qquad\text{ for  $0<t<t'$}.
\end{equation}
\end{proposition}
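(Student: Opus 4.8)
The plan is to reduce the strict-inclusion statement to a single surjectivity question for the forward semigroup, and then to settle that question by locating the spectrum of $A$.

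First, the reduction. Write $s=t'-t>0$. Since $-A$ generates an analytic semigroup (Remark~\ref{generator-rem}), each $e^{-tA}$ is injective by Proposition~\ref{inj-prop}, so $e^{tA}=(e^{-tA})^{-1}$ is a closed bijection from $D(e^{tA})=R(e^{-tA})$ onto $H$. Using the semigroup law $e^{-t'A}=e^{-sA}e^{-tA}$ one has $D(e^{t'A})=R(e^{-t'A})=e^{-sA}\big(R(e^{-tA})\big)\subseteq R(e^{-tA})$ (cf.\ Proposition~\ref{inverse-prop}). I would then apply the injective map $e^{tA}$ to the two nested ranges: it carries $R(e^{-tA})$ onto $H$, while for $y=e^{-t'A}h=e^{-tA}(e^{-sA}h)$ it gives $e^{tA}y=e^{-sA}h$, so it carries $R(e^{-t'A})$ onto $R(e^{-sA})$. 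By injectivity of $e^{tA}$ this yields the key equivalence
\[
  D(e^{tA})=D(e^{t'A}) \iff R(e^{-sA})=H,
\]
so the inclusion is strict precisely when $e^{-sA}$ fails to be surjective. Hence it suffices to show that $e^{-sA}$ is not surjective for any $s>0$.

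Second, I would translate non-surjectivity into a spectral statement. As $e^{-sA}$ is injective, it is surjective if and only if it is boundedly invertible, i.e.\ $0\in\rho(e^{-sA})$. For the analytic semigroup generated by $-A$ the spectral mapping theorem (e.g.\ \cite{EnNa00}) gives $\sigma(e^{-sA})\setminus\{0\}=\Set{e^{-s\lambda}}{\lambda\in\sigma(A)}$. Thus, if $\sigma(A)$ contains points $\lambda_n$ with $\Re\lambda_n\to+\infty$, then $e^{-s\lambda_n}\in\sigma(e^{-sA})$ with $|e^{-s\lambda_n}|=e^{-s\Re\lambda_n}\to0$; since the spectrum is closed this forces $0\in\sigma(e^{-sA})$, so $e^{-sA}$ is not boundedly invertible and, being injective, not surjective. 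Therefore the whole chain is strict once one knows $\sup\set{\Re\lambda}{\lambda\in\sigma(A)}=+\infty$.

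Third, I would establish this unboundedness to the right from the hypothesis. Coercivity \eqref{coerciv-id} gives $\Re a(v,v)\ge -k|v|^2$, so $A+kI$ is $m$-accretive and $A$ is $m$-sectorial; in particular $\Re\sigma(A)\ge -k$ and $\sigma(A)$ sits in a sector with vertex $-k$ opening along $\R_+$ of half-angle $\arctan(C_3/C_4)<\pi/2$. Hence the assumption $\sigma(A)\not\subset\set{z\in\C}{-k\le\Re z\le k}$ produces at least one $\lambda_0\in\sigma(A)$ with $\Re\lambda_0>k$. To upgrade this single exterior point to $\sup\Re\sigma(A)=+\infty$, I would use that a sectorial operator with bounded spectrum is in fact bounded (its Riesz projection over the compact set $\sigma(A)$ equals $I$, whence $A=\frac1{2\pi i}\oint\lambda(\lambda-A)^{-1}\,d\lambda\in\B(H)$), whereas the $V$-coercive $A$ attached to a genuine Gelfand triple $V\subsetneq H$ is unbounded, since $V$-coercivity with form domain $V$ would otherwise force the $V$-norm to be equivalent to the $H$-norm. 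Consequently $\sigma(A)$ is unbounded, and by the sectorial geometry it is unbounded to the right.

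The main obstacle is precisely this last step: passing from ``the spectrum leaves the strip'' to ``the spectrum is unbounded to the right.'' The bound $\Re\sigma(A)\ge -k$ together with the sector of half-angle $<\pi/2$ is what makes a single exterior spectral point compatible, in the present setting, only with an unbounded spectrum, and it is the interplay of $m$-sectoriality with the genuine unboundedness of $A$ that closes the argument; the remaining ingredients (the range identity of the first step and the spectral mapping theorem for analytic semigroups) are routine. Should one prefer to avoid the spectral mapping theorem, an alternative is to construct directly, for each $0<t<t'$, an element of $D(e^{tA})\setminus D(e^{t'A})$ out of approximate eigenvectors associated with $\lambda_n$, $\Re\lambda_n\to+\infty$, weighting them so that the resulting series lies in $R(e^{-tA})$ but not in $R(e^{-t'A})$.
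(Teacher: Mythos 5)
Your first three steps are sound and, as far as one can tell, run along the same lines the paper intends: the paper's own proof is only a citation of \cite[Thm.~11, Prop.~11]{ChJo18ax}, resting precisely on the injectivity of $e^{-tA}$ and ``known facts for semigroups'', and your reduction $D(e^{tA})=D(e^{t'A})\iff R(e^{-(t'-t)A})=H$ combined with the spectral mapping theorem for analytic semigroups is a clean way to make that explicit. The genuine gap is exactly where you flag it: the passage from \emph{one} spectral point outside the strip to $\sup\Re\sigma(A)=+\infty$. Your justification --- that a sectorial operator with bounded spectrum is bounded ``because its Riesz projection over $\sigma(A)$ equals $I$'' --- is circular: for a closed operator with compact (possibly empty) spectrum the Riesz projection need not be the identity (closed, densely defined operators with empty spectrum exist on nonzero spaces, e.g.\ the inverse of the Volterra operator), so $P=I$ is precisely what would have to be proved. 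You then also invoke ``$V\subsetneq H$'', which is not among the hypotheses of the proposition.

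In fact no argument can close this gap from the literal hypotheses: take $V=H$ and $a(u,v)=c\scal{u}{v}$ with $c>0$, so that \eqref{coerciv-id} holds with $C_4=c$, $k=0$; then $A=cI$ and $\sigma(A)=\{c\}$ is not contained in the strip $\set{z\in\C}{\Re z=0}$, yet $e^{-tA}=e^{-tc}I$ is invertible and $D(e^{tA})=H$ for every $t>0$. So the statement must be read with a stronger hypothesis --- either that $\sigma(A)$ escapes the strip for \emph{every} admissible $k$ (which does yield $\sup\Re\sigma(A)=+\infty$, and then your steps 1--3 finish the proof), or that $V\ne H$. With the latter there is a cleaner finish that needs neither the spectral mapping theorem nor any Riesz projection: if some $e^{-sA}$ were surjective, then by injectivity and the open mapping theorem it would be boundedly invertible, and analyticity gives $\|Ae^{-sA}\|_{\B(H)}<\infty$ by \eqref{eta-est}, whence $A=(Ae^{-sA})(e^{-sA})^{-1}\in\B(H)$ and $D(A)=H$, forcing $V=H$; so for $V\ne H$ no $e^{-sA}$ is surjective and your step~1 yields the strict chain. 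This shows that the chain is strictly descending exactly when $A\notin\B(H)$, which is the hypothesis one should really carry through the argument.
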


This results from the injectivity of $e^{-tA}$ via known facts for semigroups 
reviewed in \cite[Thm.\ 11]{ChJo18ax} (with reference to \cite{Paz83}), and the arguments
given for $k=0$ in \cite[Prop.\ 11]{ChJo18ax} apply mutatis mutandis.

\bigskip

The regularity result in Theorem~\ref{HDA-thm} gives rise to a well-posedness result further below,
which concerns some more regular data and solution spaces.

For convenience we shall for an unbounded operator $S\colon X\to Y$ between two general Banach spaces $X$,
$Y$ and any given subspace $U\subset Y$ adopt the notation
\begin{equation}  \label{DSU-id}
  D(S;U)=\Set{x\in D(S)}{Sx\in U} = D(S)\bigcap S^{-1}(U).
\end{equation}
This is the domain of the composite map $I_US\colon D(S)\to Y$ whereby $I_U$ denotes the inclusion map $U\to Y$.
When $S$ has closed graph and $I_U$ is bounded with respect to  some complete norm $\|\cdot\|_U$ on $U$, then 
the domain $D(S;U)$ is complete with respect to the modified graph norm $\|x\|_{D(S;U)}=\|x\|_X+\|Sx\|_U$.

This applies especially to the inverse operator $e^{TA}$, for which $D(e^{TA};U)=
e^{-TA}(U)$ when $U\subset H$.

As a more regular solution space for \eqref{fvA-intro} one may use \eqref{X1-eq} ff.,
\begin{align}
  X_1 &= L_2(0,T;D(A)) \bigcap C([0,T]; [D(A),H]_{1/2}) \bigcap H^1(0,T; H),
   \label{X1-id}
\\
  \|u\|_{X_1}&= \big(\int_0^T\|u(t)\|^2_{D(A)}\,dt 
                 +\sup_{t\in[0,T]}\|u(t)\|_{[D(A),H]_{1/2}}^2+
                 \int_0^T|\partial_t u(t)|^2)\,dt  \Big)^{1/2}.
\end{align}
The corresponding data space $Y_1$ is given  as
\begin{align}
  Y_1&= \left\{ (f,u_T) \in L_2(0,T;H) \oplus [D(A),H]_{1/2} \Bigm|  
                  u_T - y_f \in D(e^{TA}; [D(A),H]_{1/2}) \right\},
\label{Y1-id}
\\
 \| (f,u_T) \|_{Y_1}  
  &= \Big(\int_0^T|f(t)|^2\,dt 
  + \|u_T\|_{[D(A),H]_{1/2}}^2+ \| e^{TA}(u_T - y_f )\|_{[D(A),H]_{1/2}}^2\Big)^{1/2}.
\end{align} 
It was noted above that the yield of the source term $y_f=\int_0^T e^{-(T-t)A}f(t)\,dt$  a priori belongs to 
$[D(A),H]_{1/2}$ for the stipulated $f$ in $L_2(0,T;H)$.

It is an exercise to show that $Y_1$ is a Banach space, for if $(f_n,u_{T,n})$ is a Cauchy sequence
in $Y_1$, then $f_n$, $u_{T,n}$ and $e^{TA}(u_{T,n}-y_{f_n})$ converge to some $f$ in $L_2(0,T;H)$
and $u_T$, $v$ in $[D(A),H]_{1/2}$, respectively; for reasons of continuity, $y_{f_n}\to y_f$ so that 
$u_{T,n}-y_{f_n}\to u_T-y_f$ in $H$ for $n\to\infty$; as $e^{TA}$ is closed in $H$, it follows that
$u_T-y_f$ belongs to $D(e^{TA})$ with $e^{TA}(u_T-y_f)=v$; finally, as $v\in [D(A),H]_{1/2}$, 
the vector $u_T-y_f$ fulfils the condition in  \eqref{Y1-id}. Hence $(f_n,u_{T,n})$ converges in the norm of $Y_1$ to the element $(f,u_T)$ in $Y_1$, as desired.

To compare with Theorem~\ref{fvp-thm}, note that there clearly are continuous embeddings 
\begin{equation}
  \label{XY-imb}
  X_1\hookrightarrow X, \qquad Y_1\hookrightarrow Y.
\end{equation}
Thus prepared, it is now possible to give a concise proof of the following novel result, which is
a companion to Theorem~\ref{fvp-thm} in which the solutions have regularity properties that
(instead of relating to the extension $A\in\B(V,V^*)$ as in Theorem~\ref{fvp-thm}) are more closely
connected to the unbounded operator $A$ in $H$: 

\begin{theorem} \label{fvp1-thm}
  Let $A$ be a $V$-\emph{coercive} Lax--Milgram operator defined from a triple $(H,V,a)$ as above,
  and let $(f,u_T)\in L_2(0,T;H)\oplus [D(A),H]_{1/2}$ be given.
  Then the abstract final value problem \eqref{fvA-intro} has a solution $u(t)$ belonging the space
  $X_1$ in \eqref{X1-id}, if and only if the data $(f,u_T)$ belong to the subspace $Y_1$, that is, 
  \begin{equation} \label{cc1-id}
     u_T - y_f \in D(e^{TA}; [D(A),H]_{1/2}).
  \end{equation}
In the affirmative case, the solution $u$ is uniquely determined in $X_1$ and it fulfils 
$\|u\|_{X_1} \le   c \|(f,u_T)\|_{Y_1}$,
whence the solution operator $\cal{R}_1\colon (f,u_T)\mapsto u$ is continuous $Y_1\to X_1$. Moreover,
\begin{equation} \label{ut-id}
  u(t) = e^{-tA}e^{TA}\Big(u_T-\int_0^T e^{-(T-t)A}f(t)\,dt\Big) + \int_0^t e^{-(t-s)A}f(s) \,ds,
\end{equation}
where all terms belong to $X_1$ as functions of $t\in[0,T]$, and the difference in
\eqref{cc1-id} equals $e^{-TA}u(0)$, which belongs to $D(e^{TA}; [D(A),H]_{1/2})=e^{-TA}([D(A),H]_{1/2})$. 
\end{theorem}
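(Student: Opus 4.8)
The plan is to establish Theorem~\ref{fvp1-thm} by leveraging the already-proven Theorem~\ref{fvp-thm} together with the regularity equivalence in Theorem~\ref{HDA-thm}, so that most of the work reduces to matching up the more regular spaces $X_1$ and $Y_1$ with their coarser counterparts $X$ and $Y$. First I would observe that the embeddings \eqref{XY-imb} guarantee that any solution in $X_1$ is in particular a solution in $X$, and any data in $Y_1$ lie in $Y$; thus existence, uniqueness, and the Duhamel formula \eqref{ut-id} can be imported directly from Theorem~\ref{fvp-thm}, and the only genuinely new content is the $X_1$-regularity of the solution and its equivalence with the $Y_1$-membership of the data.

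The key step is to recognise that the compatibility condition \eqref{cc1-id} for $Y_1$ is precisely \eqref{eq:cc-intro} reinforced by the demand that the initial state $u(0)=e^{TA}(u_T-y_f)$ lie in $[D(A),H]_{1/2}$ rather than merely in $H$. So I would argue as follows. Given $(f,u_T)\in Y_1$, the stronger condition \eqref{cc1-id} says exactly that $u_0:=e^{TA}(u_T-y_f)\in[D(A),H]_{1/2}$; together with $f\in L_2(0,T;H)$ this means the pair $(f,u_0)$ satisfies property (i) of Theorem~\ref{HDA-thm}. That theorem then yields that the unique Cauchy solution lies in $L_2(0,T;D(A))\cap C([0,T];[D(A),H]_{1/2})\cap H^1(0,T;H)$, which is precisely $X_1$, and it supplies the estimate \eqref{ivp-est}. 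Conversely, if $u\in X_1$ solves \eqref{fvA-intro}, then (ii) of Theorem~\ref{HDA-thm} holds, forcing $f\in L_2(0,T;H)$ and $u_0=u(0)\in[D(A),H]_{1/2}$; since $u_0=e^{TA}(u_T-y_f)$ by \eqref{u0uT-id}, this is exactly \eqref{cc1-id}, i.e.\ $(f,u_T)\in Y_1$. This establishes the biconditional.

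For the continuity estimate $\|u\|_{X_1}\le c\|(f,u_T)\|_{Y_1}$, I would combine the estimate \eqref{ivp-est} from Theorem~\ref{HDA-thm}, applied with $u_0=e^{TA}(u_T-y_f)$, with the supremum bound \eqref{CDAH-ineq} controlling the $C([0,T];[D(A),H]_{1/2})$-seminorm by the $L_2(0,T;D(A))$- and $H^1(0,T;H)$-norms. Reading off the definition of $\|(f,u_T)\|_{Y_1}$, one sees that $\int_0^T|f|^2\,dt$ and $\|e^{TA}(u_T-y_f)\|_{[D(A),H]_{1/2}}^2=\|u_0\|_{[D(A),H]_{1/2}}^2$ are exactly the two quantities appearing on the right of \eqref{ivp-est}, and the extra summand $\|u_T\|_{[D(A),H]_{1/2}}^2$ only enlarges the bound; hence the estimate follows with a suitable constant. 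The boundedness of $\cal{R}_1\colon Y_1\to X_1$ is then immediate, and as in Remark~\ref{P-rem} the uniqueness gives that $\cal{R}_1$ is in fact the inverse homeomorphism of $\cal{P}$.

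The final assertions about \eqref{ut-id} come for free: the three terms of the Duhamel formula already belong to $X_1$ by the "all terms" clause of Theorem~\ref{HDA-thm}, and the identity $u(0)=e^{TA}(u_T-y_f)$ together with injectivity of $e^{-TA}$ gives that the difference $u_T-y_f$ equals $e^{-TA}u(0)$ and lies in $D(e^{TA};[D(A),H]_{1/2})=e^{-TA}([D(A),H]_{1/2})$, the last equality being the instance of the general identity $D(e^{TA};U)=e^{-TA}(U)$ recorded after \eqref{DSU-id}. The main obstacle, such as it is, lies in verifying that the norm of $Y_1$ genuinely dominates the input to \eqref{ivp-est} with no hidden loss\,---\,in particular that the a priori membership $y_f\in[D(A),H]_{1/2}$ (guaranteed by Theorem~\ref{HDA-thm} for $f\in L_2(0,T;H)$) makes the passage $u_T\mapsto u_T-y_f\mapsto e^{TA}(u_T-y_f)$ a well-defined bounded step into the right space; but this is exactly what the closed-graph structure of $e^{TA}\Phi$ and the completeness of $Y_1$ already established above provide.
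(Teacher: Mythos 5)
Your proposal is correct and follows essentially the same route as the paper: necessity of \eqref{cc1-id} via $u(0)\in[D(A),H]_{1/2}$ and \eqref{u0uT-id}, sufficiency by first invoking Theorem~\ref{fvp-thm} through the embeddings \eqref{XY-imb} and then upgrading the regularity with Theorem~\ref{HDA-thm} applied to $u_0=e^{TA}(u_T-y_f)$, and the estimate from \eqref{ivp-est} combined with \eqref{CDAH-ineq} after adding $\|u_T\|^2_{[D(A),H]_{1/2}}$. The only cosmetic difference is that you derive the necessity direction from the implication (ii)$\implies$(i) of Theorem~\ref{HDA-thm}, whereas the paper argues it directly from the continuity $u\colon[0,T]\to[D(A),H]_{1/2}$ built into $X_1$; these are interchangeable.
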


\begin{proof}
If \eqref{fvA-intro} is solved by $u \in X_1$, then $u(T)=u_T$ is by \eqref{u0uT-id} reached from
the unique initial state $u(0)$, which is in $[D(A),H]_{1/2}$ since $u$ as a member of $X_1$ is continuous 
$[0,T]\mapsto [D(A),H]_{1/2}$. But then we have
$u_T-y_f = e^{-TA} u(0)\in e^{-TA}([D(A),H]_{1/2})$, whence necessity of \eqref{cc1-id} and the
last claim is covered.

Given $(f,u_T)\in Y_1$, there is first of all by Theorem~\ref{fvp-thm} and \eqref{XY-imb} a unique
solution $u\in X$ satisfying \eqref{ut-id}.
Secondly, \eqref{cc1-id} entails that the implicit initial state $u(0)$ belongs to $[D(A),H]_{1/2}$, and since $f$ is given in $L^2(0,T;H)$, Theorem~\ref{HDA-thm} then yields the stronger conclusion that $u$ as
well as all terms in \eqref{ut-id} belong to the subspace $X_1\subset X$.
The stated estimate $\|u\|_{X_1} \le   c \|(f,u_T)\|_{Y_1}$ may in view of \eqref{CDAH-ineq} be deduced  from the one in
Theorem~\ref{HDA-thm} by replacing $u_0$ by the expression for $u(0)$, and adding $\|u_T\|^2_{[D(A),H]_{1/2}}$.
\end{proof}

It is noted that the above proof of existence of a solution in $X_1$ was conducted via an a posteriori estimate of the solution $u$ belonging to  the larger space $X$. Therefore Theorem~\ref{fvp1-thm} is basically a regularity result.

To extend the control and operator theoretic remarks from $Y$ to $Y_1$ in \eqref{Y1-id}, one may as
a variant of \eqref{EFi-id} consider the unbounded composite operator  
\begin{equation} \label{EFi'-id}
  L_2(0,T; H)\oplus [D(A),H]_{1/2} \xrightarrow[\;]{\quad \Phi\quad} [D(A),H]_{1/2}
  \xrightarrow[\;]{\quad e^{TA}\quad} [D(A),H]_{1/2}.
\end{equation}
This map $e^{TA}\Phi$ is defined at $(f,u_T)$ if and only if \eqref{cc1-id} is met, so $Y_1$ is its domain; and clearly $\|\cdot\|_{Y_1}$ is its graph norm.  
To obtain completeness of $Y_1$ in this set-up, it therefore suffices to show that the above 
$e^{TA}\Phi$ is closed; which by the continuity of $\Phi(f,u_T)=u_T-y_f$ results from closedness of the restriction in \eqref{EFi'-id} of $e^{TA}$ to an operator in $[D(A),H]_{1/2}$, but that follows at once from its closedness in $H$. Since $[D(A),H]_{1/2}$ has a Hilbert space structure (being the graph normed domain $D(\Lambda^{1/2})$ for a (non-unique) positive selfadjoint operator $\Lambda$ in $H$, cf.\ \cite[Sect.\ I.2]{LiMa72}), the data space $Y_1$ is also Hilbertable.

In analogy with Remark~\ref{P-rem}, it is seen that $u\mapsto (u'+Au,u(T))$ also gives a bounded operator
$\cal{P}_1\colon X_1\to Y_1$ and that the solution operator $\cal{R}_1\colon Y_1\to X_1$ is everywhere defined and bounded according to Theorem~\ref{fvp1-thm}; and moreover that $\cal{P}_1\cal{R}_1=I$ and $\cal{R}_1\cal{P}_1=I$ hold on $Y_1$ and $X_1$, respectively. This can be summed up thus:

\begin{corollary} \label{fvA-cor}
  The final value problem \eqref{fvA-intro} generated by the Lax--Milgram operator $A$, 
  defined from a $V$-coercive triple $(H,V,a)$, is 
  \emph{isomorphically} well posed in the pair of spaces $(X,Y)$ as well as in $(X_1,Y_1)$.
\end{corollary}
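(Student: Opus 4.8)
The plan is to deduce Corollary~\ref{fvA-cor} directly from the two structural results already in hand, Theorem~\ref{fvp-thm} and Theorem~\ref{fvp1-thm}, by recognizing that each establishes exactly the data needed for an isomorphism. First I would recall that isomorphic well-posedness of \eqref{fvA-intro} in a pair $(X,Y)$ means precisely that the map $\cal{P}u=(u'+Au,u(T))$ is a linear homeomorphism $X\to Y$; equivalently that it admits a two-sided bounded inverse. For the pair $(X,Y)$ this is already spelled out in Remark~\ref{P-rem}: Theorem~\ref{fvp-thm} shows $\cal{R}\colon Y\to X$ is well-defined and bounded with $\cal{P}\cal{R}=I$, while uniqueness of the solution in $X$ gives $\cal{R}\cal{P}=I$, so $\cal{P}$ and $\cal{R}$ are mutually inverse linear homeomorphisms.

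For the pair $(X_1,Y_1)$ I would assemble the same three ingredients from Theorem~\ref{fvp1-thm} and the discussion preceding the corollary. Boundedness of $\cal{P}_1\colon X_1\to Y_1$ follows from the mapping properties of $u\mapsto(u'+Au,u(T))$ on the more regular space $X_1$, exactly as noted just before the corollary's statement. The solution operator $\cal{R}_1\colon Y_1\to X_1$ is everywhere defined and bounded by the existence, uniqueness and estimate $\|u\|_{X_1}\le c\|(f,u_T)\|_{Y_1}$ in Theorem~\ref{fvp1-thm}. The composition identities $\cal{P}_1\cal{R}_1=I$ on $Y_1$ and $\cal{R}_1\cal{P}_1=I$ on $X_1$ then hold because $\cal{R}_1$ produces for each datum in $Y_1$ a genuine solution of \eqref{fvA-intro} in $X_1$, and conversely the solution is uniquely determined in $X_1$. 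This is formally identical to the $(X,Y)$ argument of Remark~\ref{P-rem}, now carried out one regularity level higher.

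The only point requiring care\,---\,and the mild obstacle to watch for\,---\,is that the target of $\cal{P}_1$ really lands in $Y_1$ rather than merely in $Y$; that is, one must check that $\cal{P}_1$ maps $X_1$ into $Y_1$ and not just into the larger data space. This is where the characterisation of $Y_1$ as the graph-normed domain of the composite $e^{TA}\Phi$ in \eqref{EFi'-id} does the work: for $u\in X_1$ the continuity $u\in C([0,T];[D(A),H]_{1/2})$ forces $u(0)\in[D(A),H]_{1/2}$, and then $u_T-y_f=e^{-TA}u(0)\in e^{-TA}([D(A),H]_{1/2})=D(e^{TA};[D(A),H]_{1/2})$, which is exactly the membership condition \eqref{cc1-id} defining $Y_1$. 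Hence $\cal{P}_1(X_1)\subset Y_1$, and the bijectivity combined with the two boundedness statements yields that $\cal{P}_1$ is a linear homeomorphism $X_1\to Y_1$ with inverse $\cal{R}_1$. Both pairs thus exhibit isomorphic well-posedness, establishing the corollary.
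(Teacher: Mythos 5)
Your proposal is correct and follows essentially the same route as the paper, which disposes of the $(X,Y)$ case via Remark~\ref{P-rem} and of the $(X_1,Y_1)$ case in the paragraph preceding the corollary by citing the boundedness of $\cal{P}_1$ and $\cal{R}_1$ together with $\cal{P}_1\cal{R}_1=I$ and $\cal{R}_1\cal{P}_1=I$. Your extra verification that $\cal{P}_1$ actually lands in $Y_1$ (via $u(0)\in[D(A),H]_{1/2}$ and $u_T-y_f=e^{-TA}u(0)$) is exactly the necessity argument already recorded in the proof of Theorem~\ref{fvp1-thm}, so it is a welcome but not novel elaboration.
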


In addition to the above isomorphic well-posedness, it is remarked that the Duhamel formula \eqref{u-id} also shows that $u(T)$ has two 
radically different contributions, even if $A$ has nice properties.

First, for $t=T$ the integral in \eqref{u-id} amounts to $y_f$, which can be
\emph{anywhere} in $H$, as
$f\mapsto y_f$ is a \emph{surjection} $y_f\colon L_2(0,T;V^*)\to H$. 
This was shown for $k=0$ via the Closed Range Theorem in \cite[Prop.\ 5]{ChJo18ax}, 
and more generally the surjectivity follows from this case since 
$e^{-(T-s)A}f(s)=e^{-(T-s)(A+kI)}e^{(T-s)k}f(s)$ in the integrand, whereby $A+kI$ is $V$-elliptic and $f\mapsto
e^{(T-\,\cdot)k}f$ is a bijection on $L_2(0,T;V^*)$.

Secondly, in \eqref{u-id} the first term $e^{-tA}u(0)$ solves $u'+Au=0$, and for $u(0)\ne0$ there is for  $V$-\emph{elliptic}  $A$
the precise property in non-selfadjoint dynamics that the ``height'' function $h(t)= |e^{-tA}u(0)|$ is
\begin{equation}  \label{strictly-eq}
  \text{strictly positive ($h>0$)},\quad \text{strictly decreasing ($h'<0$)},\quad \text{and \emph{strictly convex}
    ($\Leftarrow h''>0$)}  .
\end{equation}
Whilst this holds if $A$ is self-adjoint or normal, it was emphasized in \cite{ChJo18ax}
that it suffices that $A$ is just hyponormal (i.e., $D(A)\subset D(A^*)$ and $|Ax|\ge|A^*x|$ for
$x\in D(A)$, following Janas \cite{Jan94}). Recently this was followed up in \cite{JJ18logconv, JJ20}, 
where the (in the context) stronger 
\emph{logarithmic} convexity of $h(t)$ was proved \emph{equivalent} to the property weaker than hyponormality of $A$
that for $x\in D(A^2)$,
\begin{equation} \label{Alogconv-id}
  2(\Re\scal{A x}{x})^2\le \Re\scal{A^2x}{x}|x|^2+|A x|^2|x|^2 .
\end{equation}

For $V$-\emph{coercive} $A$ the strict positivity
$h>0$ also holds, by injectivity of $e^{-tA}$. But the strict decay need not extend to such $A$ (e.g.\ $h$ is constant if $u_0$ is a constant function in $\Omega$ for $A=-\lap_N$; cf.\  Section~\ref{Neu-sect}).
However, most conveniently, the strict convexity in \eqref{strictly-eq} can simply be replaced by log-convexity for coercive $A$.

Indeed, the characterisation in \cite[Lem.\ 2.2]{JJ18logconv} or \cite{JJ20}
of the log-convex $C^2$-functions $f(t)$ on $[0,\infty[\,$ as the solutions of
the differential inequality $f''\cdot f\ge (f')^2$  and the resulting criterion for $A$ in
\eqref{Alogconv-id} apply \emph{verbatim} to the coercive case. Hereby the differential calculus in
Banach spaces is exploited in a classical derivation of  the formulae for $u(t)=e^{-tA}u(0)$,
\begin{equation}
  h'(t)=-\frac{\Re\scal{Au(t)}{u(t)}}{|u(t)|},
\qquad
  h''(t)=\frac{\Re\scal{A^2u(t)}{u(t)}+|Au(t)|^2}{|u(t)|} 
            -\frac{(\Re\scal{Au(t)}{u(t)})^2}{|u(t)|^3}.
\end{equation}
But it is due to the strict positivity $|u(t)|>0$ for $t\ge0$ in the denominators that
the expressions make sense, so Proposition~\ref{inj-prop} is also crucial here.
The singularity of $|\cdot|$ at the origin likewise poses no problems for 
differentiation of $h(t)$. So perhaps the natural formulae for $h'$, $h''$ were first made rigorous
in \cite{JJ18logconv}. 

However, the stiffness intrinsic to \emph{strict} convexity corresponds well (when applicable) with the fact that
$u(T)=e^{-TA}u(0)$ also for coercive $A$  is confined to a very small dense space, as by the analyticity
\begin{equation}
  \label{DAn-cnd}
  u(T)\in \textstyle{\bigcap_{n\in\N}} D(A^n).
\end{equation}
For $u'+Au=f\ne0$, the possible 
$u_T$ will hence be a sum of an arbitrary $y_f\in H$ and 
the stiff term $e^{-TA}u(0)$. Thus $u_T$ can be prescribed in the affine space
$y_f+D(e^{TA})$. As the vector  $y_f\ne0$ will shift $D(e^{TA})\subset H$
in an arbitrary direction, $u(T)$ can be expected \emph{anywhere} in $H$ (unless $y_f\in D(e^{TA})$ is known).
So neither \eqref{DAn-cnd} nor $u(T)\in D(e^{TA})$ can be expected to hold if $y_f\ne0$---not even
if $|y_f|$ is much smaller than $|e^{-TA}u(0)|$. Thus it seems fruitful for final value problems to
consider inhomogeneous equations from the outset.

\begin{remark} \label{GS-rem}
Grubb and Solonnikov~\cite{GrSo90} treated
a large class of \emph{initial}-boundary problems of parabolic pseudo-differential equations
and worked out compatibility conditions characterising the
well-posedness in full scales of anisotropic $L_2$-Sobolev spaces
(such conditions have a long history in the differential operator case, cf.\ work
of Lions and Magenes \cite{LiMa72} and Ladyzhenskaya, Solonnikov and Ural'ceva \cite{LaSoUr68}).
Their conditions are local at the curved corner $\{0\}\times\Gamma$, except for 
half-integer values of the smoothness $s$ that were shown to require coincidence, which 
is expressed via integrals over the Cartesian product of the two boundaries $\{0\}\times\Omega$ and
$\,]0,T[\,\times\,\Gamma$.
While the conditions in \cite{GrSo90} address the regularity, 
condition \eqref{eq:cc-intro} in Theorem~\ref{fvp-thm} and \eqref{cc1-id} in Theorem~\ref{fvp1-thm}
pertain to the existence of solutions in the respective spaces.
\end{remark}

\begin{remark} Recently
Almog, Grebenkov, Helffer, Henry \cite{AlHe15,GrHelHen17,GrHe17} studied
the complex Airy operator $\mlap+\im x_1$  via triples $(H,V,a)$, leading to Dirichlet, Neumann, Robin and
transmission boundary conditions, in bounded and unbounded regions. To improve earlier remarks,
Theorem~\ref{fvp-thm} is expected to apply to their Dirichlet realisations while Theorem~\ref{fvp1-thm} would pertain to the Neumann and Robin realisations, leading to final value problems for those of their
realisations that satisfy the (strong) coercivity in \eqref{coerciv-id}. As
$\mlap+\im x_1$ has empty spectrum on $\Rn$, as shown by Herbst \cite{Her79}, it
remains to be clarified for which of the regions in \cite{AlHe15,GrHelHen17,GrHe17} there is a strictly descending chain of domains as in \eqref{dom-intro}.
\end{remark}

\section{The heat problem with the Neumann condition}
  \label{Neu-sect}

In the sequel $\Omega$ stands for a $C^\infty$ smooth, open bounded set in $\Rn$,
$n\ge1$ as described in \cite[App.~C]{G09}. In particular $\Omega$  is
locally on one side of its boundary $\Gamma=\partial \Omega$.
The problem is  then to characterise the $u(t,x)$ such that 
\begin{equation}  \label{heatN_fvp}
\left.
\begin{aligned}
  \partial_tu(t,x) -\Delta u(t,x) &= f(t,x) &&\text{ in } \, ]0,T[ \times \Omega ,
\\
  \gamma_1 u(t,x) &= 0 && \text{ on } \, ]0,T[\, \times \Gamma,
\\
   r_T u(x) &= u_T(x) && \text{ at } \left\{ T \right\} \times \Omega.
\end{aligned}
\right\}
\end{equation}
While $r_Tu(x)=u(T,x)$,
the Neumann trace on $\Gamma$ is written in the operator notation $\gamma_1u=
(\nu\cdot\nabla u)|_{\Gamma}$, whereby $\nu$ is the unit outward pointing normal vector field.
Similarly $\gamma_1$ is used for traces on $\, ]0,T[\, \times \Gamma$.

Moreover, $H^m(\Omega)$ denotes the Sobolev space normed by $\|u\|_m =
\big(\sum_{|\alpha|\le m}\int_\Omega |\partial^\alpha u|^2\,dx\big)^{1/2}$, for $m\in\N_0$, which up to equivalent 
norms equals the space $H^{m}(\overline{\Omega})$ of 
restrictions to $\Omega$ of $H^{m}(\Rn)$ endowed with the infimum norm, which also is denoted by $\|\cdot\|_m$. 
This is useful since the dual space of $H^m(\overline\Omega)$ has an identification with the closed
subspace of $H^{-m}(\Rn)$ that is given by the support condition in   
\begin{equation}
  H^{-m}_0(\overline{\Omega})=\Set{ u\in H^{-m}(\Rn)}{\operatorname{supp} u\subset \overline{\Omega}}.
\end{equation} 
For these standard facts in functional analysis the reader may consult \cite[App.\ B.2]{H}.
Chapter 6 and (9.25) in \cite{G09} could also be references for this and basics
on boundary value problems; cf.\ also \cite{Eva10, Rau91}.

The main result in Theorem~\ref{fvp-thm} applies to \eqref{heatN_fvp} for 
$V= H^1(\overline\Omega)$,  $H = L_2(\Omega)$ and $V^* \simeq H^{-1}_0(\overline{\Omega})$, for
which there are inclusions 
\begin{equation}
  H^1(\overline\Omega)\subset L_2(\Omega)\subset H^{-1}_0(\overline{\Omega}),   
\end{equation}
when $g\in L_2(\Omega)$ via extension by zero outside of $\Omega$, denoted by $e_\Omega$,  is identified with 
$e_\Omega g$ belonging to $H^{-1}_0(\overline{\Omega})$. (This of course modifies the usual identification 
$L_2(\Omega)^*\simeq L_2(\Omega)$ slightly, but $e_\Omega g$ is the function on $\Rn$ at which the infimum in the 
norm $\|g\|_0$ is attained.)
The Dirichlet form 
\begin{align}  \label{sform-id}
  s(u,v) = \sum_{j=1}^n \scal{\partial_j u}{\partial_j v}_{L_2(\Omega)}
         = \sum_{j=1}^n \int_\Omega {\partial_j u}\overline{\partial_j v}\, dx 
\end{align}
fulfils $|s(v,w)|\le \|v\|_1\|w\|_1$, and the coercivity
 \eqref{coerciv-id} holds for $C_4=1$, $k=1$ since $s(v,v)=\|v\|_1^2-\|v\|_0^2$.

The induced Lax--Milgram operator is the Neumann realisation $\mlap_N$, which is selfadjoint due
to the symmetry of $s$ and has
its domain given by $D(\lap_N)=\Set{u\in H^2(\overline{\Omega})}{\gamma_1 u=0}$. 
This is a non-trivial classical result (cf.\ the remarks prior to Theorem 4.28 in \cite{G09}, or Section
11.3 ff.\ there; or \cite{Rau91}).
Thus the homogeneous boundary condition is imposed via the condition $u(t)\in D(\lap_N)$ for $t$ in
$\,]0,T[\,$ a.e.

By the $H^1$-coercivity, $-A = \lap_N$ generates an analytic semigroup of injections
$e^{z\lap_N}$ in $\B(L_2(\Omega))$, cf.\ Proposition~\ref{inj-prop} and Remark~\ref{generator-rem}, and like before $e^{-t\lap_N}:=(e^{t\lap_N})^{-1}$. 
The extension 
$\tilde\lap\in \B( H^{1}(\overline\Omega), H^{-1}_0(\overline{\Omega}))
$ 
induces the analytic semigroup $e^{z\tilde\lap}$ defined for $z\in S_{\pi/4}$ on
$H^{-1}_0(\overline{\Omega})$, and as observed in \cite{JJ19cor}, if not before, it can be
explicitly described:

\begin{lemma}
The action of the bounded extension $\tilde \lap\colon H^{1}(\overline\Omega) \rightarrow H^{-1}_0(\overline{\Omega})$ of $\lap_N$ is given by 
\begin{align} \label{tildeLap-id}
  \tilde\lap u&=\dv(e_\Omega\grad u) \qquad \text{for $u\in H^{1}(\overline\Omega)$},  
\\  \label{tildeLap'-id}
    \tilde\lap u&=e_\Omega(\lap u)-(\gamma_1 u)dS \qquad \text{for $u\in H^{2}(\overline\Omega)$},
\end{align}
whereby $dS\in\cal{D}'(\Rn)$ denotes the surface measure at $\Gamma$.
\end{lemma}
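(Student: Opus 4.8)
The plan is to read off both identities from the defining relation of the extension, namely $\dual{\tilde\lap u}{v}=-s(u,v)$ for all $u,v\in V=H^1(\overline\Omega)$, which holds because $-A=\lap_N$ while $\dual{Au}{v}=s(u,v)$. Recall that the antiduality between $V^*\simeq H^{-1}_0(\overline\Omega)$ and $V=H^1(\overline\Omega)$ is realised by $\dual{w}{v}=\dual{w}{\varphi}$ whenever $\varphi\in H^1(\Rn)$ restricts to $v$ on $\Omega$; this is well defined precisely because $w$, being supported in $\overline\Omega$, annihilates the $H^1(\Rn)$-functions vanishing on $\Omega$ (cf.\ \cite[App.\ B.2]{H}). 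First I would check that the candidate right-hand sides genuinely lie in $H^{-1}_0(\overline\Omega)$: for $u\in H^1(\overline\Omega)$ the field $e_\Omega\grad u$ has its $n$ components in $L_2(\Rn)$ with support in $\overline\Omega$, so $\dv(e_\Omega\grad u)\in H^{-1}(\Rn)$ is supported in $\overline\Omega$ and hence lies in $H^{-1}_0(\overline\Omega)$.

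For \eqref{tildeLap-id}, I would test $\dv(e_\Omega\grad u)$ against $\varphi\in C_0^\infty(\Rn)$ and integrate by parts once on $\Rn$, obtaining
\[
  \dual{\dv(e_\Omega\grad u)}{\varphi}=-\dual{e_\Omega\grad u}{\grad\varphi}=-\sum_{j=1}^n\int_\Omega\partial_ju\,\overline{\partial_j\varphi}\,dx=-s(u,\varphi|_\Omega).
\]
The right-hand side equals $\dual{\tilde\lap u}{\varphi|_\Omega}$ by the defining relation. Since the restrictions $\varphi|_\Omega$ with $\varphi\in C_0^\infty(\Rn)$ form a dense subspace of $H^1(\overline\Omega)$, and since for such test elements the $V^*$--$V$ pairing is computed with $\varphi$ itself as an admissible extension, the two functionals $\tilde\lap u$ and $\dv(e_\Omega\grad u)$ of $H^{-1}_0(\overline\Omega)$ agree on a dense set and are therefore equal; this establishes \eqref{tildeLap-id}.

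To pass to \eqref{tildeLap'-id}, I would assume $u\in H^2(\overline\Omega)$, so that $\grad u\in H^1(\Omega)^n$ and Green's formula is available on the smooth bounded set $\Omega$. Starting from the distributional identity just obtained and applying integration by parts on $\Omega$ gives, for $\varphi\in C_0^\infty(\Rn)$,
\[
  \dual{\dv(e_\Omega\grad u)}{\varphi}=-\int_\Omega\grad u\cdot\overline{\grad\varphi}\,dx=\int_\Omega(\lap u)\overline{\varphi}\,dx-\int_\Gamma(\gamma_1u)\overline{\varphi}\,dS,
\]
which identifies $\dv(e_\Omega\grad u)=e_\Omega(\lap u)-(\gamma_1u)\,dS$ as distributions on $\Rn$; combined with \eqref{tildeLap-id} this yields \eqref{tildeLap'-id}. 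Here membership of the two summands in $H^{-1}_0(\overline\Omega)$ is automatic, since $e_\Omega(\lap u)\in L_2(\Omega)\subset H^{-1}_0(\overline\Omega)$ and $(\gamma_1u)\,dS$ is then the difference of two such distributions.

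I expect the only genuine subtlety to be the bookkeeping around the antiduality $H^{-1}_0(\overline\Omega)\times H^1(\overline\Omega)$: one must justify that the pairing is independent of the chosen $H^1(\Rn)$-extension, that testing against $\Set{\varphi|_\Omega}{\varphi\in C_0^\infty(\Rn)}$ suffices by density, and that the complex conjugations stay consistent with the antidual convention used for $s$. The distribution-theoretic computations themselves\,---\,a single integration by parts for \eqref{tildeLap-id} and Green's formula for \eqref{tildeLap'-id}\,---\,are otherwise entirely routine.
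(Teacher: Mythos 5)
Your proposal is correct and follows essentially the same route as the paper: both identities are read off from the defining relation $\dual{\tilde\lap u}{v}=-s(u,v)$ together with the identification of $V^*$ with $H^{-1}_0(\overline\Omega)$, and \eqref{tildeLap-id} follows by one integration by parts over $\Rn$ (the paper pairs against an arbitrary $H^1(\Rn)$-extension $w$ of $v$ where you test against $C_0^\infty(\Rn)$ and invoke density, which is an equivalent bookkeeping choice). The only cosmetic divergence is in \eqref{tildeLap'-id}, where the paper applies the jump formula $\partial_j(u\chi_\Omega)=(\partial_j u)\chi_\Omega-\nu_j(\gamma_0 u)\,dS$ to $\partial_j u$ for $u\in C^2(\overline\Omega)$ and then extends to $H^2(\overline\Omega)$ by density, while you use Green's formula on $\Omega$ directly at the $H^2$ level; these are interchangeable tools and both arguments are sound.
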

\begin{proof}
 When $w\in H^{1}(\Rn)$ coincides with $v$ in $\Omega$, for given $u$, $v\in
 H^{1}(\overline\Omega)$, then \eqref{sform-id} gives \eqref{tildeLap-id} as follows,
\begin{equation}
  \begin{split}
    \dual{-\tilde\lap u}{v}=s(u,v)
   & = \sum_{j=1}^n \int_{\Rn} e_\Omega(\partial_j u)\cdot\overline{\partial_j w}\, dx
\\
   & =\sum_{j=1}^n\dual{-\partial_j(e_\Omega\partial_j u)}{w}_{H^{-1}(\Rn)\times
    H^{1}(\Rn)}
  =\dual{-\dv(e_\Omega\grad u)}{v}_{H^{-1}_0(\overline{\Omega})\times H^{1}(\overline\Omega)}.
  \end{split}
\end{equation}
To show \eqref{tildeLap'-id}, one may recall that $\partial_j(u\chi_\Omega)=(\partial_j
u)\chi_\Omega-\nu_j(\gamma_0u)dS$ holds for $u\in C^{1}(\Rn)$ when $\chi_\Omega$
denotes the characteristic function of $\Omega$, and $\gamma_0$ stands for the restriction to
$\Gamma$; cf.\ the  proof in \cite[Thm.\ 3.1.9]{H}.  
Replacing $u$ by $\partial_j u$ for some $u\in C^{2}(\overline\Omega)$, and using that $\nu(x)$ is
a smooth vector field around $\Gamma$, we get 
$\partial_j(e_\Omega\partial_ju)=e_\Omega(\partial_j^{2} u)-(\gamma_0\nu_j\partial_j u)dS$, which
after summation over $j$ yields \eqref{tildeLap'-id} for such $u$ in view of \eqref{tildeLap-id}.
The formula then extends to all $u\in H^{2}(\overline\Omega)$ by continuity and density of $C^2$.
\end{proof}

In \eqref{tildeLap'-id} the last term vanishes  
for $u\in D(\lap_N)$ as $\gamma_1u=0$; whence for such $u$, clearly $\tilde\lap u
=\dv (e_\Omega\grad u)$ identifies  in $\Omega$  with the $L_2$-function $\lap u$.
However, for general $u$ in the form domain $H^{1}(\overline\Omega)$,
the terms on the right hand side of \eqref{tildeLap'-id} do not make sense.

To account for the consequences of Theorem~\ref{fvp-thm} for \eqref{heatN_fvp}, note that \eqref{eq:X}
gives rise to the solution space 
\begin{align}
  X_0 &= L_2(0,T;H^1(\overline{\Omega})) \bigcap C([0,T]; L_2(\Omega)) \bigcap H^1(0,T; H^{-1}_0(\overline{\Omega})), 
   \label{X0-id}
\\
  \|u\|_{X_0}&= \big(\int_0^T\|u(t)\|^2_{H^{1}(\overline{\Omega})}\,dt 
                 +\sup_{t\in[0,T]}\int_\Omega |u(x,t)|^2\,dx+
                 \int_0^T\|\partial_t u(t)\|^2_{H^{-1}_0(\overline{\Omega})}\,dt  \Big)^{1/2}.
\end{align}
The corresponding data space is here given in terms of the vector $y_f=\int_0^T e^{(T-t)\lap}f(t)\,dt$
from \eqref{yf-eq} as 
\begin{align}
  Y_0&= \left\{ (f,u_T) \in L_2(0,T;H^{-1}_0(\overline{\Omega})) \oplus L_2(\Omega) \Bigm|  
                  u_T - y_f \in D(e^{-T\lap_N}) \right\},
\label{Y0-id}
\\
 \| (f,u_T) \|_{Y_0}  
  &= \Big(\int_0^T\|f(t)\|^2_{H^{-1}_0(\overline{\Omega})}\,dt 
  + \int_\Omega\big(|u_T(x)|^2+|e^{-T\lap_N}(u_T - y_f )(x)|^2\big)\,dx\Big)^{1/2}.
\end{align} 
Using this framework, as in \cite[Thm.\ 4.1]{JJ19cor}, the above Theorem~\ref{fvp-thm} at once
gives the following (partial) result for \eqref{heatN_fvp}, which  
further below may serve as a reference point for the reader:

\begin{theorem}  \label{heatN-thm}
Let $A=\mlap_N$ be the Neumann realization of the Laplacian in $L_2(\Omega)$ and having its bounded extension
$H^1(\overline{\Omega})\to H^{-1}_0(\overline{\Omega})$ given by $-\tilde\lap=-\dv(e_\Omega\grad\cdot)$.
Whenever $f \in L_2(0,T;H^{-1}_0(\overline{\Omega}))$, $u_T \in L_2(\Omega)$, 
there exists a solution $u$ in $X_0$ of 
\begin{equation}
  \partial_t u-\dv(e_\Omega\grad u)=f,\qquad  r_Tu=u_T
\end{equation}
if and only if the data $(f,u_T)$ belong to $Y_0$, i.e.\ if and only if
\begin{equation}  \label{heat-ccc}
  u_T - \int_0^T e^{(T-s)\tilde\lap}f(s) \,ds\quad \text{ belongs to }\quad D(e^{-T \lap_N})=R(e^{T\lap_N}). 
\end{equation}
In the affirmative case, $u$ is uniquely determined in $X_0$ and satisfies the estimate
$\|u\|_{X_0} \leq c \| (f,u_T) \|_{Y_0}$.
It is given by the formula, in which all terms belong to $X_0$,
\begin{equation} 
  u(t) = e^{t\lap_N}e^{-T\lap_N}\Big(u_T-\int_0^T e^{(T-t)\tilde\lap}f(t)\,dt\Big) + \int_0^t e^{(t-s)\tilde\lap}f(s) \,ds.
\end{equation}
Moreover, in \eqref{heat-ccc} the difference   equals 
$e^{T\lap_N}u(0)$ in $L_2(\Omega)$. 
\end{theorem}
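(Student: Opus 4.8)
The plan is to obtain Theorem~\ref{heatN-thm} as a direct specialisation of the abstract Theorem~\ref{fvp-thm} to the concrete Gelfand triple $(H,V,a)=(L_2(\Omega),H^1(\overline{\Omega}),s)$. First I would verify that this triple meets the hypotheses of Theorem~\ref{fvp-thm}: the inclusions $H^1(\overline\Omega)\subset L_2(\Omega)\subset H^{-1}_0(\overline{\Omega})$ are continuous, dense and between separable Hilbert spaces, so they form a Gelfand triple with $V^*\simeq H^{-1}_0(\overline\Omega)$; and the Dirichlet form $s$ from \eqref{sform-id} satisfies $|s(v,w)|\le\|v\|_1\|w\|_1$ together with $\Re s(v,v)=\|v\|_1^2-\|v\|_0^2$, so the $V$-coercivity \eqref{coerciv-id} holds with $C_4=1$ and $k=1$. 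These facts are recorded in the text preceding the theorem. Consequently the induced Lax--Milgram operator is $A=-\lap_N$, self-adjoint with domain $D(\lap_N)=\{u\in H^2(\overline\Omega)\mid\gamma_1 u=0\}$, and its $\B(V,V^*)$-extension is $-\tilde\lap=-\dv(e_\Omega\grad\cdot)$ by the preceding Lemma.

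With these identifications in hand, the second step is simply to read off each ingredient of Theorem~\ref{fvp-thm} in the present notation. Since $A=-\lap_N$, one has $e^{-tA}=e^{t\lap_N}$ and the inverse $e^{tA}=e^{-t\lap_N}=(e^{t\lap_N})^{-1}$, while on $V^*$ the extension generates $e^{t\tilde\lap}$; the abstract yield $y_f=\int_0^T e^{-(T-t)A}f(t)\,dt$ from \eqref{yf-eq} thus becomes $\int_0^T e^{(T-t)\tilde\lap}f(t)\,dt$. The abstract solution space $X$ from \eqref{eq:X} then coincides with $X_0$ in \eqref{X0-id}, the compatibility condition \eqref{eq:cc-intro} becomes exactly \eqref{heat-ccc}, i.e.\ $u_T-y_f\in D(e^{-T\lap_N})=R(e^{T\lap_N})$, and the data space $Y$ with its graph norm \eqref{eq:Y-intro} becomes $Y_0$. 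The Duhamel formula \eqref{eq:fvp_solution} translates verbatim into the stated one, and the final identity that the difference equals $e^{T\lap_N}u(0)$ is the concrete form of the corresponding assertion in Theorem~\ref{fvp-thm}.

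The only point requiring care\,---\,and the closest thing to an obstacle here\,---\,is the bookkeeping of signs and of which semigroup acts on which space, since $-A=\lap_N$ forces the inversion $e^{\pm tA}=e^{\mp t\lap_N}$ and one must keep the bounded extension-semigroup $e^{t\tilde\lap}$ on $H^{-1}_0(\overline\Omega)$ distinct from $e^{t\lap_N}$ on $L_2(\Omega)$ and its unbounded inverse $e^{-t\lap_N}$. Once these are tracked consistently, existence, uniqueness, the estimate $\|u\|_{X_0}\le c\|(f,u_T)\|_{Y_0}$, the solution formula, and the identity for $e^{T\lap_N}u(0)$ all follow immediately from Theorem~\ref{fvp-thm}; there is no genuinely new analytic content, the result being a transcription of the abstract theorem into the Neumann setting, exactly as indicated by the phrase that Theorem~\ref{fvp-thm} ``at once gives'' it.
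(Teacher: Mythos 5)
Your proposal is correct and follows exactly the paper's route: the paper offers no separate argument for Theorem~\ref{heatN-thm} beyond asserting that, with the triple $(L_2(\Omega),H^1(\overline{\Omega}),s)$, the coercivity constants $C_4=1$, $k=1$, and the identification of the extension as $\tilde\lap=\dv(e_\Omega\grad\cdot)$ all in place, Theorem~\ref{fvp-thm} ``at once gives'' the result. Your write-up simply makes explicit the verification of hypotheses and the notational translation (including the sign bookkeeping $e^{\pm tA}=e^{\mp t\lap_N}$) that the paper leaves implicit, so there is no substantive difference.
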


Besides the deplorable fact that $\tilde\lap=\dv(e_\Omega\grad\cdot)$ appears in the differential equation,
instead of $\lap$, there is also no information on the boundary condition.
However, if  in addition \eqref{Holder-id} is fulfilled, the H{\"o}lder continuity yields $u(t)\in
D(\lap_N)$ for $t>0$, so $\gamma_1u=0$ is
fulfilled and $\tilde\lap u$ identifies with $\lap u$; whence one has

\begin{corollary}[\cite{JJ19cor}]
  If $u_T\in L_2(\Omega)$ and $f\colon\,[0,T]\to L_2(\Omega)$ is H{\"o}lder continuous of 
  order $\sigma\in\,]0,1[\,$, and if $u_T-y_f$ fulfils  \eqref{heat-ccc}, then the
  homogeneous Neumann heat conduction final value problem \eqref{heatN_fvp} has a uniquely
  determined solution $u$ in $X_0$, satisfying $u(t)\in \Set{u\in H^2(\overline{\Omega})}{\gamma_1 u=0}$ for
  $t>0$, and depending continuously on $(f,u_T)$ in $Y_0$. Hence problem \eqref{heatN_fvp} is well posed in the
  sense of Hadamard. 
\end{corollary}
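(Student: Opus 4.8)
The plan is to obtain the corollary directly from Theorem~\ref{heatN-thm}, using the Hölder hypothesis only to upgrade the regularity of the solution it already provides. First I would verify that Theorem~\ref{heatN-thm} applies. Since $f$ is Hölder continuous on the compact interval $[0,T]$ with values in $L_2(\Omega)$, it is bounded, hence $f\in L_2(0,T;L_2(\Omega))$; composing with the dense injection $L_2(\Omega)\hookrightarrow H^{-1}_0(\overline{\Omega})$ gives $f\in L_2(0,T;H^{-1}_0(\overline{\Omega}))$. Together with $u_T\in L_2(\Omega)$ and the assumed compatibility condition \eqref{heat-ccc}, Theorem~\ref{heatN-thm} then yields a \emph{unique} $u\in X_0$ solving $\partial_tu-\tilde\lap u=f$, $r_Tu=u_T$, satisfying the estimate $\|u\|_{X_0}\le c\|(f,u_T)\|_{Y_0}$ and represented by the Duhamel formula
\[
  u(t)=e^{t\lap_N}u(0)+\int_0^t e^{(t-s)\lap_N}f(s)\,ds,\qquad u(0)=e^{-T\lap_N}(u_T-y_f),
\]
where the convolution is written with $e^{(t-s)\lap_N}$ because $f(s)\in L_2(\Omega)=H$, on which the extension $e^{(t-s)\tilde\lap}$ restricts to $e^{(t-s)\lap_N}$. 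This already supplies existence in $X_0$, uniqueness and continuous dependence; the only thing left is the interior and boundary regularity $u(t)\in D(\lap_N)$ for $t>0$ and the identification of the equation with the genuine heat equation.

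For the regularity I would treat the two terms separately. The homogeneous term $e^{t\lap_N}u(0)$ is controlled by the smoothing of the analytic semigroup: by Proposition~\ref{Pazy'-prop}, $e^{t\lap_N}$ is differentiable for $t>0$ with $(e^{t\lap_N})'=\lap_N e^{t\lap_N}$, and iteration gives $e^{t\lap_N}u(0)\in\bigcap_{n}D(\lap_N^n)$, so in particular $e^{t\lap_N}u(0)\in D(\lap_N)$ for each $t>0$. The convolution term is precisely where the Hölder assumption is indispensable: this is the classical statement from analytic semigroup theory recalled after \eqref{Holder-id} (Pazy, Cor.\ 4.3.3), guaranteeing that for $f$ Hölder continuous of order $\sigma\in\,]0,1[\,$ the function $w(t)=\int_0^t e^{(t-s)\lap_N}f(s)\,ds$ is a classical solution with $w(t)\in D(\lap_N)$ for $t>0$. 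Adding the two contributions gives $u(t)\in D(\lap_N)$ for every $t>0$.

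It then remains to translate this abstract membership into the three lines of \eqref{heatN_fvp}. Since $D(\lap_N)=\Set{v\in H^2(\overline{\Omega})}{\gamma_1 v=0}$, the inclusion $u(t)\in D(\lap_N)$ for $t>0$ is exactly the asserted $H^2$-regularity together with the homogeneous Neumann condition $\gamma_1 u(t)=0$. Moreover, for such $u(t)$ the surface term in \eqref{tildeLap'-id} vanishes, so $\tilde\lap u(t)=e_\Omega(\lap u(t))$; that is, $\tilde\lap$ restricts to the genuine Laplacian $\lap$ acting in $\Omega$, and the equation $\partial_tu-\tilde\lap u=f$ becomes $\partial_tu-\lap u=f$ on $\,]0,T[\,\times\Omega$. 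Hence all of \eqref{heatN_fvp} is satisfied, with $u\in X_0$; and the uniqueness in $X_0$ together with the estimate $\|u\|_{X_0}\le c\|(f,u_T)\|_{Y_0}$ inherited from Theorem~\ref{heatN-thm} give existence, uniqueness and stability, i.e.\ Hadamard well-posedness.

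The one genuinely delicate point, which I would address explicitly, is that the classical Hölder solution furnished by semigroup theory must coincide with the distributional $X_0$-solution of Theorem~\ref{heatN-thm}. This is automatic, however, since both are given by the \emph{same} Duhamel formula and the $X_0$-solution is unique; so the extra regularity of the classical solution transfers directly to $u$, and no separate matching argument is needed. The main work thus reduces to correctly invoking the two semigroup regularity facts and checking the vanishing of the boundary term in \eqref{tildeLap'-id}.
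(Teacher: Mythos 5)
Your proposal is correct and follows essentially the same route as the paper: the paper likewise obtains everything except the boundary regularity from Theorem~\ref{heatN-thm}, and then invokes the classical analytic-semigroup result recalled after \eqref{Holder-id} to conclude from the H\"older continuity of $f$ that $u(t)\in D(\lap_N)$ for $t>0$, whence $\gamma_1u=0$ and $\tilde\lap u$ identifies with $\lap u$. Your additional remarks (splitting the Duhamel formula into its two terms, and matching the classical solution with the unique $X_0$-solution) merely spell out details the paper leaves implicit.
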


This result is less than ideal, of course, since H{\"o}lder continuity is not available for the
general source terms $f$ in $Y_0$, and consequently the corollary pertains only to some dense, but non-closed 
subspace of $Y_0$, which is unsatisfying when, as stated, the stability only refers to the norm on the full data space $Y_0$.

It is therefore the purpose of this paper to obtain \emph{isomorphic} well-posedness of \eqref{heatN_fvp} in
other, more suitable spaces $X_1$, $Y_1$.
The point of departure is the general well-posedness 
result in Theorem~\ref{fvp1-thm}, whereby the interpolation space satisfies 
$[D(\lap_N),L_2(\Omega)]_{1/2}=V= H^1(\overline{\Omega})$ here, as $\lap_N$ is self-adjoint in
$L_2(\Omega)$.

In view of this, \eqref{DSU-id} yields for the inverse $e^{-T\lap_N}$ that $D(e^{-T\lap_N};H^1(\overline{\Omega}))=
e^{T\lap_N}(H^1(\overline{\Omega}))$.
The data space $Y_1$ in \eqref{Y1-id} is therefore taken, in terms of $y_f=\int_0^T e^{(T-t)\lap}f(t)\,dt$ belonging to
$H^1(\overline{\Omega})$, as
\begin{equation}  \label{Y1'-id}
\begin{split}
  Y_1&= \left\{ (f,u_T) \in L_2(0,T;L_2({\Omega})) \oplus H^1(\overline{\Omega}) \Bigm|  
                  u_T - y_f \in D(e^{-T\lap_N}; H^1(\overline{\Omega})) \right\},
\\
 \| (f,u_T) \|_{Y_1}  
  &= \Big(\int_0^T\|f(t)\|^2_{L_2({\Omega})}\,dt 
  + \int_\Omega \sum_{|\alpha|\le1}\big(|\partial^\alpha_x u_T(x)|^2
  +|\partial^\alpha_x e^{-T\lap_N}(u_T - y_f )(x)|^2\big)\,dx\Big)^{1/2}.
\end{split}
\end{equation}
Correspondingly the solution space in \eqref{X1-id} amounts to
\begin{equation}     \label{X1'-id}
\begin{split}
  X_1 &= L_2(0,T;H^2(\overline{\Omega})) \bigcap C([0,T]; H^1(\overline{\Omega})) \bigcap H^1(0,T; L_2(\Omega)),
\\
  \|u\|_{X_1}&= \big(\int_0^T\|u(t)\|^2_{H^{2}(\overline{\Omega})}\,dt 
                 +\sup_{t\in[0,T]}\sum_{|\alpha|\le 1} \int_\Omega |\partial^\alpha_x u(x,t)|^2\,dx+
                 \int_0^T\|\partial_t u(t)\|^2_{L_2({\Omega})}\,dt  \Big)^{1/2}.
\end{split}
\end{equation}
There are, of course, also continuous embeddings $X_1\hookrightarrow X_0$ and  $Y_1\hookrightarrow Y_0$ among these spaces.

Within this framework, the stronger Theorem~\ref{fvp1-thm} at once gives the following novelty 
for the classical inverse heat conduction problem with the homogeneous Neumann condition in \eqref{heatN_fvp}:

\begin{theorem}  \label{heatN'-thm}
Let $A=\mlap_N$ be the Neumann realization of the Laplacian in $L_2(\Omega)$.
If $f \in L_2(0,T;L_2(\Omega))$ and $u_T \in H^1(\overline{\Omega})$, 
there exists in the Banach space $X_1$ in \eqref{X1'-id} a solution $u$  of the final value problem \eqref{heatN_fvp}, namely
\begin{equation}
  \partial_t u-\lap_N u=f,\qquad \gamma_1u=0,\qquad  r_Tu=u_T,
\end{equation}
if and only if the data $(f,u_T)$ are given in the Banach space $Y_1$ in \eqref{Y1'-id}, i.e.\ if and only if
$(f,u_T)$ satisfy the compatibility condition:
\begin{equation}  \label{heat'-ccc}
  u_T - \int_0^T e^{(T-s)\lap_N}f(s) \,ds\quad \text{ belongs to }\quad D(e^{-T \lap_N},H^1(\overline{\Omega})). 
\end{equation}
In the affirmative case, the solution $u$ is uniquely determined in $X_1$ and for some constant $c>0$ independent of
$(f,u_T)$ it satisfies $\|u\|_{X_1} \leq c \| (f,u_T) \|_{Y_1}$.
It is given by the formula, in which all terms belong to $X_1$,
\begin{equation} \label{heatDuhamel-id}
  u(t) = e^{t\lap_N}e^{-T\lap_N}\Big(u_T-\int_0^T e^{(T-t)\lap_N}f(t)\,dt\Big) + \int_0^t e^{(t-s)\lap_N}f(s) \,ds.
\end{equation}
Furthermore the difference  in \eqref{heat'-ccc} equals 
$e^{T\lap_N}u(0)$, which belongs to $D(e^{-T \lap_N},H^1(\overline{\Omega}))=e^{T\lap_N}(H^1(\overline{\Omega}))$. 
\end{theorem}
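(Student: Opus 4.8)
The plan is to deduce Theorem~\ref{heatN'-thm} as a direct specialisation of the abstract Theorem~\ref{fvp1-thm} to the Neumann triple $(H,V,a)=(L_2(\Omega),H^1(\overline\Omega),s)$ already set up in this section. First I would verify the identifications that make the abstract spaces $X_1$, $Y_1$ in \eqref{X1-id}, \eqref{Y1-id} coincide with the concrete ones in \eqref{X1'-id}, \eqref{Y1'-id}. Since $\lap_N$ is selfadjoint and $H^1$-coercive, the key input is the interpolation identity $[D(\lap_N),L_2(\Omega)]_{1/2}=H^1(\overline\Omega)$, noted just before the theorem. This lets me rewrite $L_2(0,T;D(A))$ as $L_2(0,T;H^2(\overline\Omega))$ (using $D(\lap_N)\subset H^2(\overline\Omega)$ with equivalent graph norm, the classical elliptic regularity result cited for the domain of $\lap_N$), the continuity space $C([0,T];[D(A),H]_{1/2})$ as $C([0,T];H^1(\overline\Omega))$, and the compatibility target $D(e^{TA};[D(A),H]_{1/2})$ as $D(e^{-T\lap_N};H^1(\overline\Omega))$. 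The last step uses the convention $e^{-T\lap_N}=(e^{T\lap_N})^{-1}$ from Section~\ref{inj-sect} together with the identity $D(e^{-T\lap_N};H^1(\overline\Omega))=e^{T\lap_N}(H^1(\overline\Omega))$ recorded after \eqref{DSU-id}.

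Having matched the spaces, I would invoke Theorem~\ref{fvp1-thm} verbatim: for $(f,u_T)\in L_2(0,T;L_2(\Omega))\oplus H^1(\overline\Omega)$ it gives existence of a solution in $X_1$ precisely when \eqref{heat'-ccc} holds, together with uniqueness, the estimate $\|u\|_{X_1}\le c\|(f,u_T)\|_{Y_1}$, and the Duhamel representation \eqref{heatDuhamel-id}. The only genuinely problem-specific point is to upgrade the abstract equation $\partial_t u+Au=f$ in $\mathcal D'(0,T;V^*)$ to the pointwise Neumann problem \eqref{heatN_fvp}. Here membership in $X_1$ forces $u(t)\in D(\lap_N)$ for a.e.\ $t$, so by the description $D(\lap_N)=\{u\in H^2(\overline\Omega)\mid\gamma_1u=0\}$ the homogeneous Neumann condition $\gamma_1u=0$ is automatically satisfied, and on $D(\lap_N)$ the extension $\tilde\lap u$ identifies with the genuine $L_2$-function $\lap u$ by the remark following \eqref{tildeLap'-id} (the singular term $(\gamma_1u)\,dS$ vanishes). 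Consequently $\partial_t u-\lap u=f$ holds in $L_2(\Omega)$ for a.e.\ $t$, and the semigroup factor $e^{(T-t)\tilde\lap}$ in the abstract formula may be written as $e^{(T-t)\lap_N}$ acting in $L_2(\Omega)$, since $f(s)\in L_2(\Omega)$.

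The main obstacle, such as it is, lies not in any new estimate but in justifying these identifications cleanly\,---\,in particular that the three regularity spaces in $X_1$ really coincide (with equivalent norms) rather than merely being included in one another, and that the value space $H^1(\overline\Omega)$ in which $u_T$ and $e^{-T\lap_N}(u_T-y_f)$ are required to lie is exactly the interpolation space $[D(\lap_N),L_2(\Omega)]_{1/2}$. Both rest on standard but nontrivial facts (selfadjoint elliptic regularity for $\lap_N$ and the spectral characterisation of the half-interpolation space as $D(\lap_N^{1/2})$ after a coercive shift), all of which have been assembled in the preceding discussion. Finally I would record that $y_f\in H^1(\overline\Omega)$ for $f\in L_2(0,T;L_2(\Omega))$\,---\,this is the instance of Theorem~\ref{HDA-thm} noted after \eqref{Y1-id}, guaranteeing the difference $u_T-y_f$ in \eqref{heat'-ccc} is a well-defined element of $H^1(\overline\Omega)$\,---\,and that the closing assertion $e^{T\lap_N}u(0)=u_T-y_f\in e^{T\lap_N}(H^1(\overline\Omega))$ is immediate from the last line of Theorem~\ref{fvp1-thm} once $u(0)\in H^1(\overline\Omega)$ is read off from $u\in C([0,T];H^1(\overline\Omega))$.
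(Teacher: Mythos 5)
Your proposal is correct and follows essentially the same route as the paper: the paper likewise obtains Theorem~\ref{heatN'-thm} as a direct specialisation of Theorem~\ref{fvp1-thm}, having prepared exactly the identifications you list ($[D(\lap_N),L_2(\Omega)]_{1/2}=H^1(\overline\Omega)$ by selfadjointness, $D(\lap_N)=\{u\in H^2(\overline\Omega)\mid\gamma_1u=0\}$ with equivalent norms by elliptic regularity, $D(e^{-T\lap_N};H^1(\overline\Omega))=e^{T\lap_N}(H^1(\overline\Omega))$, and the identification of $\tilde\lap u$ with $\lap u$ on $D(\lap_N)$ so that the class~2 boundary condition is absorbed into $u(t)\in D(\lap_N)$ a.e.). The only cosmetic caveat is that it is membership in the abstract space $L_2(0,T;D(\lap_N))$ (not in the concrete $L_2(0,T;H^2(\overline\Omega))$ alone) that forces $\gamma_1u=0$, but this is exactly what Theorem~\ref{fvp1-thm} delivers, so the argument stands.
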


To emphasize the complex nature  of the inverse Neumann heat equation, it might serve a purpose to write out the
inequality that  according to Theorem~\ref{heatN'-thm} is satisfied by the solution $u$ in $X_1$:
\begin{equation}
\begin{split}
  &\qquad\int_0^T\int_\Omega\big(|\partial_tu(t,x)|^2 +\sum_{|\alpha|\le2}|\partial^\alpha_x u(t,x)|^2\big)\,dx\,dt 
          +\sup_{t\in[0,T]}\sum_{|\alpha|\le 1} \int_\Omega |\partial^\alpha_x u(x,t)|^2\,dx 
\\
&\le
  c\int_0^T\int_\Omega |f(t,x)|^2\,dx\,dt + c\sum_{|\alpha|\le1}\int_\Omega\Big(|\partial^\alpha_x u_T(x)|^2
  +\big|\partial^\alpha_x 
                       e^{-T\lap_N}(u_T - \int_0^T e^{(T-t)\lap_N}f(t)\,dt )(x)\big|^2\Big)\,dx. 
\end{split}  
\end{equation}

As a particular case of the comments after Theorem~\ref{fvp1-thm}, $\cal{P}=(\partial_t-\lap_N,r_T)$ is a linear homeomorphism $X_1\to Y_1$ between Hilbertable spaces $X_1$, $Y_1$. 
Hence there is the following new result on a classical problem:

\begin{corollary} \label{heatN-cor}
  The final value problem \eqref{heatN_fvp} for the homogeneous Neumann heat equation in the smooth open bounded set $\Omega$ is isomorphically well posed in the spaces $X_1$ and $Y_1$ in \eqref{X1'-id} and \eqref{Y1'-id}.
\end{corollary}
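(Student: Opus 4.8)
The plan is to obtain the corollary as the concrete incarnation of the abstract isomorphic well-posedness already recorded in Corollary~\ref{fvA-cor}, specialised to the Gelfand triple $(H,V,a)=(L_2(\Omega),H^1(\overline{\Omega}),s)$ with $A=\mlap_N$. First I would recall that this triple satisfies the standing hypotheses: $s$ is bounded and $H^1$-coercive with $C_4=1$, $k=1$ by the identity $s(v,v)=\|v\|_1^2-\|v\|_0^2$, so $A=\mlap_N$ is a $V$-coercive Lax--Milgram operator; moreover $A$ is self-adjoint. Consequently Theorem~\ref{fvp1-thm} and Corollary~\ref{fvA-cor} apply, and it remains only to verify that the concretely described spaces $X_1$, $Y_1$ in \eqref{X1'-id}, \eqref{Y1'-id} coincide, with equivalent norms, with the abstract spaces from \eqref{X1-id}, \eqref{Y1-id}, and that the concrete operator $\cal{P}=(\partial_t-\lap_N,r_T)$ agrees with the abstract $\cal{P}_1$.

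The central identifications I would carry out are three. (a) By the classical elliptic regularity result quoted above, $D(\lap_N)=\Set{u\in H^2(\overline{\Omega})}{\gamma_1u=0}$ with graph norm equivalent to $\|\cdot\|_2$, so $L_2(0,T;D(A))=L_2(0,T;H^2(\overline{\Omega}))$ and the boundary condition $\gamma_1u=0$ is encoded automatically in membership of $X_1$. (b) Since $\lap_N$ is self-adjoint, the last line of Theorem~\ref{HDA-thm} gives $[D(\lap_N),L_2(\Omega)]_{1/2}=V=H^1(\overline{\Omega})$ with equivalent norms; this turns the abstract $C([0,T];[D(A),H]_{1/2})$ into $C([0,T];H^1(\overline{\Omega}))$ and identifies the $[D(A),H]_{1/2}$-norms appearing in $\|\cdot\|_{Y_1}$ with the stated $H^1$-norms. (c) Using \eqref{DSU-id} for the closed operator $e^{-T\lap_N}$ one has $D(e^{-T\lap_N};H^1(\overline{\Omega}))=e^{T\lap_N}(H^1(\overline{\Omega}))$, which is exactly the compatibility condition \eqref{heat'-ccc}; together with (b) this matches the graph norm $\|e^{TA}(u_T-y_f)\|_{[D(A),H]_{1/2}}$ with the term $\sum_{|\alpha|\le1}\|\partial^\alpha_x e^{-T\lap_N}(u_T-y_f)\|_{L_2}^2$ in $\|\cdot\|_{Y_1}$. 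I would also note that for $u\in X_1$ one has $u(t)\in D(\lap_N)$ for a.e.\ $t$, so $\tilde\lap u=\lap u$ and the abstract extension $\tilde\lap$ may be replaced by $\lap_N$ throughout, in particular in \eqref{heatDuhamel-id}; the integrals $\int_0^Te^{(T-t)\tilde\lap}f\,dt$ and $\int_0^Te^{(T-t)\lap_N}f\,dt$ coincide because $f$ takes values in $H=L_2(\Omega)$, where the two semigroups agree.

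With these identifications, Theorem~\ref{heatN'-thm} provides the everywhere-defined bounded solution operator $\cal{R}_1\colon Y_1\to X_1$, while the mapping properties of differentiation and of $\lap_N$ (via $D(\lap_N)\hookrightarrow H^2(\overline{\Omega})$) show that $\cal{P}=(\partial_t-\lap_N,r_T)$ is bounded $X_1\to Y_1$; here $r_Tu\in H^1(\overline{\Omega})$ is immediate from the factor $C([0,T];H^1(\overline{\Omega}))$ in $X_1$, itself justified by (b). Exactly as in Remark~\ref{P-rem} and the discussion after Theorem~\ref{fvp1-thm}, the existence half of Theorem~\ref{heatN'-thm} gives $\cal{P}\cal{R}_1=I$ on $Y_1$ and its uniqueness half gives $\cal{R}_1\cal{P}=I$ on $X_1$, so $\cal{P}$ is a bijection with bounded inverse, i.e.\ a linear homeomorphism between the Hilbertable spaces $X_1$ and $Y_1$. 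This is precisely isomorphic well-posedness.

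The main obstacle --- though it is bookkeeping rather than a deep difficulty --- is steps (b)--(c): one must be scrupulous that the $H^1$-norm written in \eqref{Y1'-id} is genuinely the interpolation norm $\|\cdot\|_{[D(\lap_N),L_2(\Omega)]_{1/2}}$ up to equivalence (so that the graph norm of $e^{-T\lap_N}\Phi$ is reproduced) and that the closedness of $e^{-T\lap_N}$ as an operator in $H^1(\overline{\Omega})$ --- needed for completeness of $Y_1$ --- follows from its closedness in $L_2(\Omega)$ together with the continuity of $\Phi(f,u_T)=u_T-y_f$. Once the self-adjoint identification $[D(\lap_N),L_2(\Omega)]_{1/2}=H^1(\overline{\Omega})$ is in place, these become routine, but they are the only points where the concrete Neumann structure (elliptic regularity and self-adjointness) is actually used.
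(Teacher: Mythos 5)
Your proposal is correct and follows essentially the same route as the paper: the corollary is obtained by specialising Theorem~\ref{fvp1-thm} and the ensuing remarks (Corollary~\ref{fvA-cor}) to the triple $(L_2(\Omega),H^1(\overline{\Omega}),s)$ with $A=\mlap_N$, using precisely the identifications $D(\lap_N)=\Set{u\in H^2(\overline{\Omega})}{\gamma_1u=0}$, $[D(\lap_N),L_2(\Omega)]_{1/2}=H^1(\overline{\Omega})$ by self-adjointness, and $D(e^{-T\lap_N};H^1(\overline{\Omega}))=e^{T\lap_N}(H^1(\overline{\Omega}))$, after which $\cal{P}\cal{R}_1=I$ and $\cal{R}_1\cal{P}=I$ give the linear homeomorphism. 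Your added care about norm equivalences and the replacement of $\tilde\lap$ by $\lap_N$ only makes explicit what the paper treats in the discussion preceding Theorem~\ref{heatN'-thm}.
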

It is left for the future to develop a theory for the final value heat conduction problem subjected
to the inhomogeneous Neumann condition $\gamma_1u=(\nu\cdot\nabla u)|_\Gamma =g$ at the curved
boundary. It is envisaged that the techniques used in \cite{ChJo18ax} for the
inhomogeneous Dirichlet condition can be adapted to the set-up above.

\begin{remark}
To give some background, it is recalled that there is a phenomenon of $L_2$-instability in case
$f=0$ in \eqref{heat-intro}. This was perhaps first described by
Miranker  \cite{Mir61}, who addressed the homogeneous Dirichlet condition at the boundary. The
instability is found via the Dirichlet realization 
of the Laplacian, $\mlap_D$, and its $L_2(\Omega)$-orthonormal
basis $e_1(x), e_2(x), \dots$ of eigenfunctions associated to the
usual ordering of its eigenvalues $0<\lambda_1\le\lambda_2\le\dots$ counted with multiplicities. A similar notation applies to
the Neumann realisation $\mlap_N$ studied above, although $\lambda_1=0$ in this case.
It has been a major classical theme (with a too rich history to recall here) that 
Weyl's law for the counting function $N(\lambda)=\#\set{j}{0\le\lambda_j\le\lambda}$ in terms of the
measures $|\Omega|$, $|\partial\Omega|$ and the volume $\omega_n$ of the unit ball in $\Rn$
fulfils, for $\lambda\to\infty$,
\begin{equation} 
 N(\lambda)= (\frac{\sqrt{\lambda}}{2\pi})^n\omega_n|\Omega|\mp \frac14
 (\frac{\sqrt{\lambda}}{2\pi})^{n-1}\omega_{n-1}|\partial\Omega| + o(\lambda^{(n-1)/2}).
\end{equation}
Hereby $-$ and $+$ refers to the Dirichlet and Neumann boundary conditions, respectively, but as the
leading term is the same, a classical  inversion gives the same crude eigenvalue
asymptotics for both conditions,
\begin{equation} \label{Weyl-id}
  \lambda_j={\cal O}(j^{2/n})\quad\text{ for $j\to\infty$}.
\end{equation}
Hence there is also $L_2$-instability for the homogeneous Neumann problem: 
The eigenfunction basis $e_1(x)$, $e_2(x), \dots$ gives rise to a sequence of final value data
$u_{T,j}(x)=e_j(x)$ lying on the unit sphere in $L_2(\Omega)$ as
$\|u_{T,j}\|= \|e_j\|=1$ for $j\in\N$.
But the corresponding solutions to $u'\mlap u=0$,
i.e.\  $u_j(t,x)=e^{(T-t)\lambda_j}e_j(x)$,
have initial states $u(0,x)$ with $L_2$-norms that because of \eqref{Weyl-id} 
grow \emph{rapidly} for $j\to\infty$,
\begin{equation}
  \|u_j(0,\cdot)\| = e^{T\lambda_j}\|e_j\| = e^{T\lambda_j}\nearrow\infty.
\end{equation}
However, this $L_2$-instability
only indicates that the $L_2(\Omega)$-norm is an insensitive choice for problem \eqref{heat-intro}. The
task is hence to obtain a norm on $u_T$ giving better control over the backward
calculations of $u(t,x)$---for the homogeneous Neumann heat problem \eqref{heat-intro}, 
an account of this was given in Theorem~\ref{heatN'-thm} ff.
\end{remark}

\section{Final remarks}  \label{final-sect}
\begin{remark}
  Since the Neumann condition $\gamma_1u=0$ is given in terms of a trace operator effectively of class~2
  (as $\gamma_1$ is defined on $H^2$ but not on $H^1$), it is not surprising that the well-posedness for \eqref{heatN_fvp} 
is obtained in the more regular spaces $X_1$,  $Y_1$ in Theorem~\ref{heatN'-thm}, whereas Theorem~\ref{heatN-thm} is 
somewhat inconclusive. However, since Theorem~\ref{heatN'-thm} can be seen as a regularity result adjoined to 
Theorem~\ref{heatN-thm}, it is an important clarification that the additional assumption $f\in
L_2(0,T;L_2(\Omega))$ does not alone suffice for the regularity needed to ascertain that $u$
belongs to $X_1$: to avoid a singularity at $t=0$ in the $L_2$-norm of $\lap_Ne^{t\lap_N}u(0)$, the
implicit initial state $u(0)$ must be stipulated to belong to the interpolation space
$[D(\lap_N),L_2]_{1/2}=H^1(\Omega)$; cf.\ the equivalent conditions in Theorem~\ref{LM101-thm}. In
particular this gave rise to the  compatibility condition \eqref{heat'-ccc} using the modified
domain $D(e^{-T\lap_N},H^1(\Omega))$, which is a new and  non-trivial element of the theory. 
\end{remark}

\begin{remark}
It is envisaged that the isomorphic well-posedness in Theorem~\ref{heatN'-thm} and Corollary~\ref{heatN-cor} can be carried over to the inhomogeneous Neumann condition and to the Robin condition and other class 2 problems. More generally the present results should extend to final value problems for  differential equations with boundary conditions that define parabolic Cauchy problems belonging to the pseudo-differential boundary operator calculus, cf.\  \cite{GrSo90, G96}.
This is left for the future---the main purpose of the present paper is to show how the compatibility conditions should be modified in order to cover a problem of a high class.
\end{remark}

\begin{remark} \label{LM-rem}
Injectivity of the linear map $u(0)\mapsto u(T)$ for the homogeneneous equation $u'+Au=0$,
i.e.\ its backwards uniqueness, was proved 60 years ago by Lions and
Malgrange~\cite{LiMl60} for problems with $t$-dependent
sesquilinear forms $a(t;u,v)$. Besides some
$C^1$-properties in $t$, they assumed that (the principal part of) $a(t;u,v)$ is
symmetric and uniformly $V$-coercive in the sense that $a(t;v,v)+\lambda\|v\|^2\ge \alpha\|v\|^2$
for fixed  $\lambda\in\R$, $\alpha>0$ and all $v\in V$. 
(Bardos and Tartar \cite{BaTa73} relaxed these $C^1$-assumptions and made some non-linear extensions.)
In Problem~3.4 of \cite{LiMl60}, the authors
asked if backward uniqueness can be shown under the general \emph{non-symmetric} hypothesis of strong $V$-coercivity
$\Re a(t;v,v)+\lambda\|v\|^2\ge \alpha\|v\|^2$. 
The above Proposition~\ref{inj-prop} gives an affirmative
answer for the $t$-independent case of their problem.
\end{remark}

\section*{Acknowledgement}
The author is grateful for some references that were provided by the anonymous referee.
%\bibliography{referencer}
%\bibliographystyle{amsalpha}

\providecommand{\bysame}{\leavevmode\hbox to3em{\hrulefill}\thinspace}
\providecommand{\MR}{\relax\ifhmode\unskip\space\fi MR }
% \MRhref is called by the amsart/book/proc definition of \MR.
\providecommand{\MRhref}[2]{%
  \href{http://www.ams.org/mathscinet-getitem?mr=#1}{#2}
}
\providecommand{\href}[2]{#2}

\end{document}